\newtheorem{theorem}{Theorem}[section]
\newtheorem{lemma}[theorem]{Lemma}
\newtheorem{corollary}[theorem]{Corollary}
\newtheorem{proposition}[theorem]{Proposition}
\newtheorem{remark}{Remark}
\theoremstyle{definition}
\newcommand{\R}{\mathbb{R}}
\newcommand{\Z}{\mathbb{Z}}
\newcommand{\N}{\mathbb{N}}
\newcommand{\ve}{\varepsilon}
\newcommand{\flr}[1]{\left\lfloor #1 \right\rfloor}
\newcommand{\cel}[1]{\left\lceil #1 \right\rceil}
\renewcommand{\mod}[1]{{\ifmmode\text{\rm\ (mod~$#1$)}\else\discretionary{}{}{\hbox{ }}\rm(mod~$#1$)\fi}}
\begin{document}

\title[Infinite symmetric ergodic index ] {Infinite symmetric ergodic index and related examples in infinite measure}

\author[Isaac Loh]{Isaac Loh}

\address{ }

\email{isaaccloh@gmail.com}

\author[C. E. Silva]{Cesar E. Silva}
\address{Department of Mathematics and Statistics, Williams College, 
\\ Williamstown, MA 01267} 
\email{csilva@williams.edu}

\author[Athiwaratkun]{Ben Athiwaratkun}
\address[Ben Athiwaratkun]{}
\email{ben.athiwaratkun@gmail.com}


\begin{abstract}
We define an infinite measure-preserving 
 transformation to have  infinite symmetric ergodic index if all finite Cartesian products of the transformation and  its inverse
 are ergodic, and show that infinite symmetric ergodic index does not imply that all products of powers are conservative, so does not  imply power weak mixing. We provide a sufficient condition for $k$-fold and infinite symmetric ergodic index and use it to answer a question on the relationship between product conservativity and product ergodicity. 
 We also show that a class of rank-one transformations that have infinite symmetric ergodic index are not power weakly mixing, 
 and precisely characterize a class of power weak transformations that generalizes existing examples.  
 \end{abstract}

\subjclass[2010]{Primary 37A40, 37A25; Secondary 28D05}

\keywords{Ergodic,  infinite measure, power weak mixing, infinite ergodic index}

\maketitle 


\section{Introduction}

In \cite{KaPa63}, Kakutani and Parry constructed infinite measure-preserving Markov shifts   such 
that all their finite  Cartesian products are ergodic and called this property
{\bf infinite ergodic index}. They proved that ergodic Cartesian square does not imply infinite ergodic index as it does in the finite measure-preserving case. In \cite{AdFrSi01}, Adams, Friedman and the second-named author 
constructed a rank-one transformation $T$ that has infinite ergodic index but 
such that $T\times T^2$ is not conservative, hence not ergodic. A  transformation such that  
 all finite Cartesian products of all its nonzero powers are 
 ergodic  is called {\bf power weakly mixing} \cite{DGMS99}, and if all finite Cartesian products of all its  powers are conservative it is said to be {\bf power conservative}. As we only consider 
 invertible transformations on nonatomic spaces,  under these assumptions, if a transformation is ergodic, then it is conservative (see e.g. \cite[3.9.1]{Si08}). It follows from \cite{AdFrSi01} that infinite ergodic index does not imply power conservative, so it does not imply 
power weak mixing.  In \cite{AdFrSi97}, Adams, Friedman and the second-named author 
modified the classic Chacon transformation by adding at each stage of the rank-one construction 
enough new intervals so that the measure  of a column at each stage   is twice    the measure of the  previous column. This results in an infinite measure-preserving transformation which is shown in \cite{AdFrSi97} to have infinite ergodic index, and that has been called the infinite Chacon transformation. It was later shown that the infinite Chacon is not power weakly mixing \cite{GrHiWa03} (in fact, not power conservative). 
Soon after \cite{AdFrSi01}, Bergelson asked if there existed a transformation $T$ that has
infinite ergodic index but such that $T\times T^{-1}$ is not ergodic. In \cite{CFKLPS}, Clancy et al. construct rank-one transformations $T$ such that $T\times T$ is ergodic but $T\times T^{-1}$ is not
ergodic; it is  also shown in \cite{CFKLPS} that for a rank-one $T$, the product  $T\times T^{-1}$ must always be conservative. The example with $T\times T^{-1}$  not ergodic was later generalized to 
more general group actions in Danilenko \cite{Da16}, but the full Bergelson question remains open.

We define a transformation $T$ to have {\bf infinite symmetric ergodic index}
if $T\times T^{-1}$ has infinite ergodic index. We show that infinite symmetric ergodic index does not imply power conservativity, so does not imply power weak mixing, by demonstrating in Section~\ref{S:Chacon-iei} that a condition, shared by the infinite Chacon transformation and several other existing examples with infinite ergodic index, implies
infinite symmetric ergodic index. A sufficient condition for finite symmetric ergodic index is also given in Proposition \ref{P:chaconergcond}, and is used in
 Theorem~\ref{T:asymmprods} to answer Question 1 of Danilenko \cite{Da16}. In Section~\ref{S:Chacon-npc}, we give a condition guaranteeing that a rank-one $T$ is not power conservative and explicitly bound the $k$ for which $T\times T^2 \times \cdots \times T^k$ is conservative. We show that a large class of infinite Chacon transformations meet this condition and  consequentially are not power weakly mixing, although they have infinite symmetric ergodic index. 
 In Section~\ref{S:Chacon-pwm} we show that power weak mixing holds in a class of transformations containing the primary example of  \cite{DGMS99} (and also the examples in \cite{AM}), and the bounded-recurrence example of \cite{GrHiWa03}. For terms not defined here the reader may refer for \cite{Si08} or \cite{CFKLPS}. 
 
\subsection{Acknowledgements}
We would like to thank partial support provided by the National Science Foundation REU Grant  
DMS-0850577 
and DMS - 1347804 and the Bronfman Science Center of Williams College.

\section{Preliminaries}

\subsection{Rank-One Cutting and Stacking}

Our main constructions will be obtained through rank-one cutting and stacking. We define a \textbf{Rokhlin column} $C$ to be a ordered and finite collection of \textbf{levels}, which are intervals in $\R$ of the same measure. A column $C = \{I_j\}$ is associated with a map taking every point, except for those in the topmost level, to the point directly above it in the above level. The levels of the column are at heights $0$ through $h$, where $h$ is called the \textbf{height} of the column (note that the column contains $h+1$ levels). The rank-one method involves iteratively constructing the columns and building an associated transformation as follows: 
\begin{enumerate}
	\item Take the first column to be the unit interval.
	\item To build $C_{n+1}$ from $C_n$, cut $C_n$ into $r_n \ge 2$ subcolumns of equal width and add $s_{n,k}$ new levels (\textbf{spacers}) above the $k$th subcolumn for $k \in [r_n-1]$. The spacers are levels taken from $\R$ that are disjoint from the levels of $C_n$. Then stack every subcolumn under the subcolumn to its right. 
	\item Build $C_n$ such that $X = \bigcup_{n \in \N} C_n$.
\end{enumerate}
There are many means of characterizing rank-one transformations but the one we use is the notation of \textbf{descendants}. We say that a level $I$ in $C_n$ splits into descendants in $C_{n+1}$, which have heights indexed by $D(I,n+1)$. If $I$ was at height $i$ in $C_n$, then it is easy to see that $D(I,n+1) = i + H_n$, where $H_n$ is the \textbf{height set} defined to be 
\[
H_n = \{0\} \cup \left\{ \sum_{\ell=0}^k (h_n + s_{n,k} +1): \, 0 \le k < r_n - 2 \right\}. 
\]
It is then clear that $h_{n+1} = h_n + s_{n,r_n-1} + \max H_n$. By an inductive argument, $D(I,n+j) = H_{n+j-1} + H_{n+j-2} + \cdots + H_n + i$, where by abuse of notation we use $+$ to refer to the sum of sets (i.e.\ $A + B = \{a+b: \, a \in A, b \in B\}$) and shifts of sets by integer addition.

\subsection{Product Ergodicity and Conservativity}

Lemma \ref{L:semiringce2} provides distinct necessary and sufficient conditions for ergodicity of product transformations. Its proof is standard (see e.g.\ Lemma 2.1 in \cite{CFKLPS}). 

\begin{lemma}\label{L:semiringce2}
	Let $T_0,\ldots,T_{k-1}$ be rank-one measure preserving transformations on measure spaces $X_0,\ldots,X_{k-1}$ and let $\alpha_0,\ldots,\alpha_{k-1}$ be nonzero integers. Let $T:=T_0 \times \cdots \times T_{k-1}$, and $X:=X_0\times \cdots \times X_{k-1}$. Let $\mathcal{D}$ be the sufficient semiring of rectangles of the form $R_0 \times \cdots \times R_{k-1}$, where $R_\ell$ is a level of some column of $T_\ell$. If $T$ is conservative ergodic, then for all $A,B \in \mathcal{D}$, we have 
	\[A\subset \bigcup_{n} T^n B \mod{\mu}.\]
Furthermore, $T$ is ergodic if for every $A,B \in \mathcal{D}$ of the form $A = I_0 \times \cdots \times I_{k-1}$ for $I_\ell$ the base of column $C_{\ell,i}$, $i\in \N$, and $B = T_0^{b_0} I_0 \times \cdots \times T_{k-1}^{b_{k-1}} I_{k-1}$, where  $0\le b_\ell < h_{\ell,i}$ for all $\ell$, $1\le \ell \le k-1$, there exists a one-to-one map $\tau: A \rightarrow B$ satisfying $\tau(x) \in \left\{ T^nx: \, n \in \Z \right\}$ for all $x \in A$ and:
\begin{align*}
&\text{ and }\mu(D(\tau)) \ge \beta(b_0,\ldots,b_{k-1}) \mu(A),\\
&\text{ and }\frac{d\mu\circ \tau}{d\mu}(v) \ge \delta(b_0,\ldots,b_{k-1}) \text{ for all } v\in D(\gamma).
\end{align*}
where $\delta(b_0,\ldots,b_{k-1}),\, \beta(b_0,\ldots,b_{k-1})>0$ are positive functions of the heights of sides of $B$, and $D$ and $R$ denote the domain and range of a map, respectively. 
\end{lemma}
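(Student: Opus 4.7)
The lemma has two independent claims, which I plan to prove separately. For the necessary direction, the argument is essentially the definition of conservative ergodicity: since $T$ is conservative and ergodic, for any $B$ of positive measure the full orbit $\bigcup_{n \in \Z} T^n B$ is $T$-invariant with positive measure, hence has full $\mu$-measure by ergodicity, so $A \subset \bigcup_n T^n B$ modulo $\mu$-null sets for every $A \in \mathcal{D}$.

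For the sufficient direction I plan to argue by contradiction. Suppose $E$ is a $T$-invariant set with both $\mu(E), \mu(E^c) > 0$, and fix a small $\varepsilon > 0$ to be specified. Since $\mathcal{D}$ is a sufficient semiring of finite-measure rectangles, standard approximation produces a level-product $R_E \in \mathcal{D}$ at a sufficiently late stage $i$ with $\mu(E \cap R_E) > (1-\varepsilon)\mu(R_E)$, and similarly an $R_{E^c}$ at the same stage with $\mu(E^c \cap R_{E^c}) > (1-\varepsilon)\mu(R_{E^c})$. A diagonal shift by a single power of $T$, combined with $T$-invariance of $E$, reshapes each rectangle so that its first coordinate is the base of its column; the resulting rectangles then take the form $T_0^{b_0} I_0 \times \cdots \times T_{k-1}^{b_{k-1}} I_{k-1}$ needed to invoke the hypothesis.

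Applying the hypothesis with $A = I_0 \times \cdots \times I_{k-1}$ and $B$ each of the reshaped rectangles yields orbit-preserving one-to-one maps $\tau_E, \tau_{E^c}$ from $A$ to $R_E, R_{E^c}$ with the stated domain and Radon--Nikodym bounds. Because each such $\tau$ is a disjoint union of restrictions of the measure-preserving powers $T^n$ and is one-to-one, $\tau$ is itself measure preserving. The $T$-invariance of $E$ then gives $\tau(D(\tau) \cap E) = \tau(D(\tau)) \cap E \subseteq B \cap E$, and tracking how the density of $E$ transfers along $\tau_E$ yields a lower bound of the form $\mu(A \cap E) \ge (\beta_E - \varepsilon) \mu(A)$; symmetrically, $\tau_{E^c}$ yields $\mu(A \cap E^c) \ge (\beta_{E^c} - \varepsilon) \mu(A)$. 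Combining these with $\mu(A \cap E) + \mu(A \cap E^c) = \mu(A)$, and if necessary iterating at later stages to improve the density constants, leads to the desired contradiction.

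The principal obstacle is the approximation-and-reshaping step: the hypothesis requires rectangles of a very specific form (a pure base product and a shifted base product in the same column), while the approximating rectangles coming from Lebesgue density are arbitrary level-products in $\mathcal{D}$. Using $T$-invariance of $E$ together with a diagonal shift along orbits is essential to align these approximating rectangles with the form required by the hypothesis, and the subsequent quantitative bookkeeping of how the constants $\beta, \delta$ propagate through $\tau$, together with careful choice of the stage $i$, is where the real work of the argument lies.
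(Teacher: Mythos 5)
The paper does not actually prove this lemma; it is dispatched in one line by citing Lemma~2.1 of \cite{CFKLPS} as ``standard,'' so your argument has to stand entirely on its own. Your necessity direction is fine: $\bigcup_{n\in\Z}T^nB$ is $T$-invariant of positive measure, hence co-null by ergodicity, and contains $A$ modulo $\mu$.

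The sufficiency direction, however, breaks at the concluding step. After transferring densities along $\tau_E$ and $\tau_{E^c}$ you obtain $\mu(A\cap E)\ge(\beta_E-\varepsilon)\mu(A)$ and $\mu(A\cap E^c)\ge(\beta_{E^c}-\varepsilon)\mu(A)$ and assert that together with $\mu(A\cap E)+\mu(A\cap E^c)=\mu(A)$ this is a contradiction. It is not, unless $\beta_E+\beta_{E^c}>1+2\varepsilon$. The lemma only assumes $\beta(b_0,\ldots,b_{k-1})>0$, and in every application in this paper these constants are exponentially small in the heights (e.g.\ $\beta=4^{-k\sum_\ell b_\ell}$ in Proposition~\ref{P:chaconergcond} and $\beta=t^{-vz}$ in Theorem~\ref{T:tqfinal}), so the two lower bounds come nowhere near exhausting $\mu(A)$; having $E$ and $E^c$ each occupy a small definite fraction of $A$ is perfectly consistent. ``Iterating at later stages'' does not rescue this: refining improves the density of $E$ in the approximating rectangle but does not improve $\beta$, which is the binding constant, and $\beta$ typically deteriorates as the height tuple of the rectangle containing a density point grows with the stage. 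What is missing is the mechanism that bootstraps the density of the invariant set in a \emph{fixed} target rectangle all the way to $1$ rather than merely bounding it away from $0$ from both sides --- this is the actual content of the cited standard lemma, and it is exactly the step your outline replaces with the phrase ``leads to the desired contradiction.'' A secondary confusion: a diagonal power of $T$ shifts every coordinate by the same amount, so it can normalize at most one coordinate of a level-product to the base of its column; it cannot convert a general approximating rectangle into $I_0\times\cdots\times I_{k-1}$, and no reshaping of the target $B$ is needed in the first place, since every level-product at stage $i$ is already of the form $T_0^{b_0}I_0\times\cdots\times T_{k-1}^{b_{k-1}}I_{k-1}$.
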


Lemma \ref{L:cedescendants2} provides necessary conditions for the ergodicity of products of rank-ones, and follows from Lemma \ref{L:semiringce2} by argument of \cite{CFKLPS}, Lemma 2.2. We denote the descendant set of the $\ell^\text{th}$ indexed transformation $T_\ell$ by $D_\ell$.

\begin{lemma}\label{L:cedescendants2} 
	Let $T:=T_0^{\alpha_0} \times  \cdots \times T_{k-1}^{\alpha_{k-1}}$ be a product of (nonzero) integer powers of rank-one transformations in the product space $X_0\times \cdots \times X_{k-1}$.
	Fix $i,k\in\N$. Let $I_\ell$ be the base of column $C_{i,\ell}$ and $A = I_0 \times \cdots \times I_{k-1}$ be the product of $k$ such base levels, and $B = T^{b_0} I_0 \times T^{b_1} I_1 \times \cdots \times T^{b_{k-1}} I_{k-1}$, where $0\le b_\ell<h_{i,\ell}$ for $0\le \ell \le k-1$. 
	Then $T$ is conservative ergodic only if for every $\epsilon > 0$ and every choice of $i$ and $b_{0},\ldots,b_{k-1}$, there is a natural number $j>i$ such that for at least $(1 - \ve)\prod_{\ell = 0}^{k-1} \big|D_\ell(I_\ell,j)\big|$ tuples of descendants $(a_0, \ldots, a_{k-1}) \in D_0(I_0, j)\times \cdots \times D_{k-1}(I_{k-1},j)$ we have $a_\ell = d_\ell +b_\ell + \alpha_\ell n$ for $\ell = 0,\ldots,k-1$ for some tuple $(d_0,\ldots,d_{k-1}) \in D_0(I_0, j)\times \cdots \times D_{k-1}(I_{k-1},j)$ and $n\in \Z$. 
	
\end{lemma}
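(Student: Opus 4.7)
The plan is to invoke the necessary direction of Lemma~\ref{L:semiringce2}, which gives $A \subset \bigcup_n T^n B \mod{\mu}$ under the conservative ergodicity of $T$, and then to transcribe this covering statement into a combinatorial condition on descendants at a sufficiently late stage $j$.

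First I would fix the rectangles $A = I_0 \times \cdots \times I_{k-1}$ and $B = T^{b_0} I_0 \times \cdots \times T^{b_{k-1}} I_{k-1}$, and at any stage $j > i$ decompose $A$ into the disjoint union of its descendant rectangles, indexed by tuples $(a_0, \ldots, a_{k-1}) \in D_0(I_0, j) \times \cdots \times D_{k-1}(I_{k-1}, j)$, all of a common measure. Similarly $B$ decomposes into descendant rectangles sitting at heights $(b_0 + d_0, \ldots, b_{k-1} + d_{k-1})$ as $(d_0, \ldots, d_{k-1})$ ranges over the same descendant product. The key observation is that whenever $n \in \Z$ satisfies $b_\ell + d_\ell + \alpha_\ell n \in [0, h_{\ell, j}]$ for every $\ell$, the map $T^n$ carries the descendant of $B$ at height tuple $(b_\ell + d_\ell)$ onto the level of $X$ at height tuple $(b_\ell + d_\ell + \alpha_\ell n)$; hence a descendant rectangle of $A$ with heights $(a_\ell)$ lies inside $T^n B$ precisely when the equations $a_\ell = b_\ell + d_\ell + \alpha_\ell n$ admit a solution in $n$ and $(d_\ell)$.

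Given $\epsilon > 0$, I would then use $\mu(A \setminus \bigcup_n T^n B) = 0$ to choose a finite cutoff $N$ with $\mu\bigl(A \setminus \bigcup_{|n|<N} T^n B\bigr) < (\epsilon/2)\,\mu(A)$. For $j$ sufficiently large, each $T^n B$ with $|n|<N$ agrees, up to a small set near the top of the column stage $j$, with a union of stage-$j$ descendant rectangles. Since each descendant rectangle of $A$ has measure $\mu(A)/\prod_\ell |D_\ell(I_\ell, j)|$, the proportion of tuples $(a_\ell)$ whose descendant fails the equation $a_\ell = b_\ell + d_\ell + \alpha_\ell n$ (for every admissible $n$ and $(d_\ell)$) is bounded by $\epsilon$, which is exactly the claim.

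The main obstacle is controlling the boundary effect: a particular translate $T^n B$ may partially escape column $C_{\ell, j}$, so $T^n B \cap A$ need not be an exact union of full descendant rectangles. This is handled, as in the proof of Lemma~2.2 of \cite{CFKLPS}, by noting that for any fixed $n$ the mass of descendants within distance $\max_\ell |\alpha_\ell n|$ of the top or bottom of $C_{\ell, j}$ is of order $1/h_{\ell, j}$, which tends to zero as $j \to \infty$; summing over the finitely many relevant $n < N$ keeps the total boundary contribution below $\epsilon/2$, and combined with the covering estimate from the previous paragraph this yields the stated bound.
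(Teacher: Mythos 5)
Your argument is correct and follows the same route the paper intends: the paper gives no independent proof of this lemma, deferring to the covering condition $A\subset\bigcup_n T^nB \mod{\mu}$ from Lemma~\ref{L:semiringce2} and the argument of Lemma~2.2 of \cite{CFKLPS}, which is precisely what you reconstruct (truncate the union to $|n|<N$, pass to stage-$j$ descendant rectangles, and discard the descendants too close to the top or bottom of the columns). The only cosmetic point is that the proportion of such boundary descendants in coordinate $\ell$ is more naturally bounded by a constant depending on $N$, $b_\ell$, $\alpha_\ell$ divided by $|D_\ell(I_\ell,j)|$ rather than by $h_{\ell,j}$, but both quantities tend to infinity with $j$, so your estimate goes through unchanged.
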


We now state these results as conditions on elements of products of the descendant sets:

\begin{proposition} \label{P:arbproduct} For rank-one transformations $T_0,\ldots,T_{k-1}$ and nonzero integers $\alpha_0,\ldots,\alpha_{k-1}$, $T:=T_0^{\alpha_0} \times \cdots \times T_{k-1}^{\alpha_{k-1}}$ is conservative ergodic only if for every $i\in \N$, $\ve>0$, and $b$ $k$-tuple $(b_0,\ldots,b_{k-1}) \in \prod_{\ell = 0}^{k-1} \{0,\ldots,h_{i,\ell}-1\}$, there is a natural number $j>i$ such that for at least $(1-\ve)\prod_{\ell = 0}^{k-1} |D_\ell(I_\ell,j)|$ $k$-tuples of descendants of the form $(a_0,\ldots,a_{k-1}) \in D_0(I_0,j)\times \cdots \times D_{k-1} (I_{k-1},j)$, we have corresponding $k$-tuples $(d_0,\ldots,d_{k-1}) \in D_0(I_0,j)\times \cdots \times D_{k-1} (I_{k-1},j)$ such that 
\begin{align} \label{E:ergeq}
\frac{a_0 - d_0 - b_0}{\alpha_0}= \frac{a_\ell - d_\ell - b_\ell}{\alpha_\ell} \in \Z \setminus\{0\}
\end{align}
for each $\ell = 0,\ldots,k-1$. 

$T$ is ergodic if there are constants $\beta(b_0,\ldots, b_{k-1})$ for all $b$ $k$-tuples such that the following \textbf{product ergodicity condition} holds:
There exists some $j \ge i$ such that to at least some fraction $\beta(b_0,\ldots , b_{k-1})$ of the $j$-stage descendant tuples $(a_0,\ldots,a_{k-1})$ with a complementary descendant $k$-tuple meeting \eqref{E:ergeq}, we can associate a unique such $d$ $k$-tuple. 
\end{proposition}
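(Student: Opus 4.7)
The plan is to treat the two halves of the proposition separately: necessity is a direct rewriting of Lemma \ref{L:cedescendants2}, and sufficiency is an application of Lemma \ref{L:semiringce2} with the required map $\tau$ built out of the product ergodicity condition.

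For necessity, I would simply rearrange the conclusion of Lemma \ref{L:cedescendants2}. The identity $a_\ell = d_\ell + b_\ell + \alpha_\ell n$ holding simultaneously across all $\ell$ with a common $n \in \Z$ is equivalent to $n = (a_\ell - d_\ell - b_\ell)/\alpha_\ell$ being an integer independent of $\ell$, which is exactly \eqref{E:ergeq} modulo the restriction $n \neq 0$. To justify excluding $n = 0$, note that this case forces $a_\ell - b_\ell \in D_\ell(I_\ell,j)$ for every $\ell$; when $(b_0,\ldots,b_{k-1}) \neq 0$ this is a sparse condition on the product descendant set for $j$ large (a counting argument on $D_\ell \cap (D_\ell + b_\ell)$), and when $(b) = 0$ the conclusion of the lemma is trivially met with $A = B$, so the $n = 0$ contribution can be absorbed into the error $\varepsilon$.

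For sufficiency, I apply Lemma \ref{L:semiringce2} to $\tilde T := T_0^{\alpha_0} \times \cdots \times T_{k-1}^{\alpha_{k-1}}$, viewed as a single product transformation. At the stage $j$ furnished by the product ergodicity condition, decompose $A$ as the $\mu$-a.e.\ disjoint union of descendant product rectangles $A_{(a)}$ indexed by $(a_0,\ldots,a_{k-1}) \in D_0(I_0,j) \times \cdots \times D_{k-1}(I_{k-1},j)$, and likewise $B$ into rectangles $B_{(d)}$ sitting at product heights $(b_\ell + d_\ell)_\ell$. For each good $a$-tuple --- a fraction $\beta(b_0,\ldots,b_{k-1})$ of all tuples --- the uniquely associated $d$-tuple together with \eqref{E:ergeq} supplies a common integer $n$ such that $\tilde T^{-n}$ carries $A_{(a)}$ isometrically onto $B_{(d)}$; set $\tau := \tilde T^{-n}$ on each such $A_{(a)}$.

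The main obstacle is extracting honest injectivity of $\tau$ from the hypothesis. I would read ``we can associate a unique such $d$-tuple'' as demanding that $a \mapsto d$ be an injection on the good set, i.e., that distinct good $a$-tuples receive distinct $d$-tuples; otherwise two pieces of $A$ would both be sent onto the same $B_{(d)}$ and $\tau$ would collapse. Granted this reading, $\tau$ is a well-defined injection into $B$, and the Lemma \ref{L:semiringce2} bounds are immediate: $\mu(D(\tau)) \ge \beta(b_0,\ldots,b_{k-1})\,\mu(A)$ by counting good rectangles, and because $\tau$ is piecewise a measure-preserving iterate of $\tilde T$, the Radon--Nikodym derivative satisfies $d\mu \circ \tau / d\mu \equiv 1$ on $D(\tau)$, so one may take any $\delta(b_0,\ldots,b_{k-1}) \le 1$. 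The hypotheses of Lemma \ref{L:semiringce2} are therefore met, yielding ergodicity of $T$.
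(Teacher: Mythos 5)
Your proposal is correct and follows essentially the same route as the paper: necessity is read off from Lemma \ref{L:cedescendants2}, and sufficiency is obtained by decomposing $A$ into $j$-stage descendant subrectangles, sending each good one to its image in $B$ via the power of $T$ determined by \eqref{E:ergeq}, and invoking Lemma \ref{L:semiringce2} with $\delta = 1$ and the measure bound coming from $\beta(b_0,\ldots,b_{k-1})$. Your added remarks (handling the $n=0$ case in necessity, and reading ``unique'' as injectivity of $a \mapsto d$ so that $\tau$ does not collapse distinct pieces of $A$) make explicit two points the paper leaves implicit, and are consistent with how the paper uses the proposition.
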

\begin{proof}
Necessity of \eqref{E:ergeq} follows from Lemma \ref{L:cedescendants2}. For sufficiency of the second, write $A = I_0 \times \cdots \times I_{k-1}$ and $B = T_0^{b_0}I_0 \times \cdots \times T_{k-1}^{b_{k-1}} I_{k-1}$. Let $F(j)$ denote all of the descendant tuples in $ D_0(I_0,j)\times \cdots \times D_{k-1} (I_{k-1},j)$ which are matched with unique complementary tuples $(d_0,\ldots,d_{k-1})$ meeting the stated conditions. By supposition, we can pick a $j$ such that $|F(j)|\ge \beta(b_0,\ldots , b_{k-1})\prod_{\ell = 0}^{k-1} |D_\ell(I_\ell,j)|$. Set $n = -\frac{a_0 - d_0 - b_0}{\alpha_0}$. Then for each tuple $(a_0,\ldots,a_{k-1}) \in F(j)$, 
\begin{align*}
T^n \left(T_0^{a_0} J_0 \times \cdots \times T_{k-1}^{a_{k-1}} J_{k-1}\right) = T_0^{d_0 + b_0}J_0 \times \cdots \times T_{k-1}^{d_{k-1}+ b_{k-1}} J_{k-1} \subset B,
\end{align*}
for some $n \in \Z$, where $(d_0,\ldots,d_{k-1}) \in D_0(I_0,j)\times \cdots \times D_{k-1} (I_{k-1},j)$. Because the correspondence of rectangles in $A$ indexed by $F(j)$ to covered rectangles in $B$ is one-to-one and measure preserving, we can infer by Lemma \ref{L:semiringce2} (with $\delta = 1$) that $T$ is ergodic. 
\end{proof}

The following result on the conservativity of products of rank-one transformations is \cite[Proposition 4.2]{CFKLPS}. For $k\in\N$, let $I^{(k)} \equiv \underbrace{I \times \cdots \times I }_{ k \text{ times}}$ and define $T^{(k)}$ similarly.

\begin{proposition} \label{P:conservprod2}
	Let $T$ be a rank-one transformation on a measure space $Y$, let $k\in\N$ and let
	$(\alpha_0,\ldots,\alpha_{k-1})$ be a $k$-tuple of nonzero integers, and $X$ the product 
	of $k$-fold copies of $Y$. Then the product transformation  $S:=T^{\alpha_0}\times \cdots \times T^{\alpha_{k-1}}$ on $X$ is conservative if and only if for every  $A = I^{(k)}$, where $I$ is the base of column $C_i$, for every $\ve > 0$ there is $j$ such that at for at least $(1 - \ve)|D(I,j)|^k$ of the $k$-tuples $(a_0,\ldots,a_{k-1}) \in D(I,j)^k$, there exist complementary $k$-tuples $(d_0,\ldots,a_{k-1}) \in D(I,j)^k$ satisfying $\frac{a_0-d_0}{\alpha_0} = \frac{a_\ell- d_\ell}{\alpha_\ell} \in \Z\setminus \{0\}$ for $\ell = 1,\ldots,k-1$. 
	\end{proposition}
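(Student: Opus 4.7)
The plan is to prove both directions by translating recurrence of $S$ on product rectangles into the combinatorial descendant condition at a sufficiently deep stage $j$. The key identification is this: at stage $j > i$, the base level $I$ of $C_i$ decomposes into $|D(I,j)|$ equal sub-levels, one for each $a \in D(I,j)$ corresponding to its height in $C_j$; consequently $A = I^{(k)}$ decomposes into $|D(I,j)|^k$ sub-rectangles indexed by tuples $(a_0,\ldots,a_{k-1}) \in D(I,j)^k$. For $j$ large and $|n|$ bounded, $S^n = T^{n\alpha_0}\times\cdots\times T^{n\alpha_{k-1}}$ sends the sub-rectangle at $(a_0,\ldots,a_{k-1})$ to the one at $(a_0 + n\alpha_0,\ldots,a_{k-1}+n\alpha_{k-1})$ in $C_j^{(k)}$, and the image lies in $A$ precisely when each coordinate of this shifted tuple remains in $D(I,j)$. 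Setting $d_\ell = a_\ell + n\alpha_\ell$ recovers exactly the stated divisibility condition with common value $-n \in \Z\setminus\{0\}$.

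For sufficiency, the descendant condition at tolerance $\ve$ gives immediately that at least $(1-\ve)\mu(A)$ of $A$ returns to $A$ under some nonzero power of $S$. Letting $\ve \to 0$ yields $\mu(A \cap \bigcup_{n\ne 0} S^{-n}A) = \mu(A)$; a standard Halmos/Maharam recurrence argument for $\sigma$-finite invertible systems upgrades single return to infinite recurrence, and the sufficient-semiring generation in Lemma \ref{L:semiringce2} extends conservativity from the rectangles $I^{(k)}$ to arbitrary measurable subsets of $X$.

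For necessity, assume $S$ conservative and fix $\ve > 0$ and $A = I^{(k)}$. By $\sigma$-finite recurrence and countable additivity choose $N$ with $\mu\bigl(A \cap \bigcup_{0 < |n| \le N} S^{-n}A\bigr) \ge (1-\ve/2)\mu(A)$. Pick $j$ large enough that (i) for every $|n| \le N$ and every $\ell$, the column translation $a \mapsto a + n\alpha_\ell$ stays inside $C_j$ except for $a$ in a bounded boundary region near heights $0$ and $h_j$, and (ii) the fraction of $a \in D(I,j)$ within $N\max_\ell|\alpha_\ell|$ of the top or bottom of $C_j$ is less than $\ve/(2k)$. The sub-rectangles whose return under some $S^n$ with $0<|n|\le N$ is \emph{not} captured by a clean column translation must have at least one coordinate in this boundary set, and so form at most an $\ve/2$-fraction of $D(I,j)^k$; combined with the $(1-\ve/2)$-fraction that return at all, at least $(1-\ve)|D(I,j)|^k$ of the tuples $(a_0,\ldots,a_{k-1})$ admit a complementary $(d_0,\ldots,d_{k-1}) = (a_0 + n\alpha_0,\ldots,a_{k-1}+n\alpha_{k-1}) \in D(I,j)^k$.

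The main technical obstacle is the boundary estimate (ii): showing that for bounded $|n|$ only a vanishing fraction of $a \in D(I,j)$ fail to satisfy $a + n\alpha \in D(I,j)$ because the translate exits $C_j$. Using the inductive identity $D(I,j) = H_{j-1} + D(I,j-1)$, each stage multiplies $|D(I,j)|$ by $|H_{j-1}|$ while the number of descendants within distance $N\max_\ell|\alpha_\ell|$ of the column boundary grows only by an additive $O(N\max_\ell|\alpha_\ell|)$; the boundary-to-total ratio thus decays to zero provided $|H_m| \ge 2$ infinitely often, which holds for any rank-one construction with cut sequence $r_m \ge 2$.
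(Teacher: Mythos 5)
The paper does not actually prove this proposition; it is imported verbatim as \cite[Proposition 4.2]{CFKLPS}, so there is no internal proof to compare against. Your argument is a correct reconstruction of the standard proof of that cited result. The two load-bearing observations in your necessity direction are both right: for a tuple $a$ with every coordinate at distance more than $N\max_\ell|\alpha_\ell|$ from the top and bottom of $C_j$, the image $S^nR_a$ (for $0<|n|\le N$) is a full product of levels of $C_j$, hence meets $A$ if and only if it is contained in $A$, which happens exactly when $a+n\alpha\in D(I,j)^k$; and the boundary tuples occupy a vanishing fraction of $D(I,j)^k$ because their number per coordinate is bounded by a constant depending only on $N\max_\ell|\alpha_\ell|$ while $|D(I,j)|=\prod_{m=i}^{j-1}|H_m|\to\infty$. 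Combining the $(1-\ve/2)$ recurrence bound with the $\ve/2$ boundary loss gives exactly the stated $(1-\ve)$ fraction, with $\frac{a_\ell-d_\ell}{\alpha_\ell}=-n$ common to all $\ell$.

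One step in the sufficiency direction deserves to be made explicit. The diagonal cubes $I^{(k)}$, with $I$ the base of $C_i$, do not by themselves form a sufficient semiring for $X=Y^{(k)}$: a wandering set could sit far from the diagonal corner, say inside $T^{p_0}I\times\cdots\times T^{p_{k-1}}I$, and Lemma \ref{L:semiringce2} as stated is about ergodicity rather than conservativity, so citing it does not close this. The standard repair, which you should state, is that the maps $T^{p_0}\times\cdots\times T^{p_{k-1}}$ commute with $S$, so $\mu\bigl(I^{(k)}\setminus\bigcup_{n\ne0}S^nI^{(k)}\bigr)=0$ transfers to every product of levels of $C_i$; these products do form a sufficient semiring, and the usual wandering-set estimate (pick a semiring element $A$ with $\mu(A\cap W)>\tfrac{3}{4}\mu(A)$ and use pairwise disjointness of the sets $S^nW$ to bound the returning part of $A\cap W$ by $\mu(A\setminus W)$) then yields conservativity. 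With that sentence added, your proof is complete and matches the argument of \cite{CFKLPS}.
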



\section{Infinite Symmetric  Ergodic Index on a General Class of Rank-One}
\label{S:Chacon-iei}

In this section, we consider a fairly broad class of transformations having infinite ergodic index. These incorporate subsets of a nice additive form into height sets $H_n$ infinitely often. In these subsets, elements will be spaced $z_n$ and $z_n+1$ apart in such a way that, under fairly light conditions, $T^{(k)}$ is ergodic for $k \in \N \cup \{\infty\}$.  

Let $I$ be a level of column $C_i$. 
As a general technique for rank-one transformations for elements $a_\ell$ and $d_\ell$ in $D(I,j),\, j \ge i$, we write the standard descendant sum decomposition as $a_\ell = \sum_{q = i}^{j - 1} a_{\ell,q}$ and $d_\ell = \sum_{q = i}^{j - 1} d_{\ell,q}$, where $a_{\ell,q}$ and $d_{\ell,q}$ are elements of the height set $H_q$. For any $x \in I \times \cdots \times I$, we also write the expansion of $x$ as $(x_i,x_{i+1},\ldots )$, where $x_q \in H_q^k$ is the tuple of height set elements corresponding to the subrectangle containing $x$. Throughout, we will let $M = \min \{ m : \, n_m\ge i\}$.

\begin{lemma} \label{L:bound}
Let $T$ be a rank-one transformation, $I$ a level of $C_i$, and $(b_1,\ldots , b_{k-1})$ positive integers. Furthermore fix a sequence $(\delta_{n_m})_{m \in \N}$ such that $\sum_{m} \delta_{n_m}^k \rightarrow \infty$. For each $m$, suppose that there are sets $E_{1,n_m}, \ldots, E_{k-1,n_m} \subset F_{n_m} \subset H_{n_m}^k$ such that $|E_{\ell,n_m}| \ge \delta_{n_m}^k |H_{n_m}|^k$, and $|F_{n_m}| \le D |E_{\ell,n_m}|$ for all $\ell$ and $m$, where $D$ is a positive constant. Let $\gamma = \sum_{\ell= 0}^{k-1} b_\ell$. Then there exists a positive number $K(b_0,\ldots , b_{k-1})$ such that: 
\begin{align} \label{E:longreq}
	\mu\Big( \Big\{ x \in I^{(k)} : \, &\text{the first } \gamma \text{ indices } m \text{ for which } x_{n_m} \in F_{n_m} \text{ have:} \\
	&x_{n_m} \in E_{1,n_m} \,\text{for }b_1 \text{ times, then}\nonumber\\
	&x_{n_m} \in E_{2,n_m} \,\text{for }b_2 \text{ times, then}\nonumber\\
	&\vdots \nonumber\\
	& x_{n_m} \in E_{k-1,n_m} \,\text{for }b_{k-1} \text{ times, } \Big\} \Big)\nonumber\\
	 >& K(b_1,\ldots , b_{k-1})\,  \mu(I^{(k)}). \nonumber
\end{align}
\end{lemma}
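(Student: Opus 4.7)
The plan is to observe that, relative to the normalized measure $\mu(\cdot)/\mu(I^{(k)})$ on $I^{(k)}$, the expansion sequence $(x_q)_{q \ge i}$ consists of independent random variables with $x_q$ uniformly distributed on $H_q^k$. This is built into the rank-one cutting-and-stacking construction: at each stage $q$, $C_q$ is cut into $r_q$ equal-width subcolumns, so independently in each of the $k$ factors of $I^{(k)}$ and independently of prior stages, a point $x$ selects one subcolumn uniformly. Translating subcolumns to height-set elements yields the stated joint distribution.

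Given this product structure, I would proceed in two steps. First, consider the independent events $A_m := \{x_{n_m} \in F_{n_m}\}$; their probabilities satisfy
\[
\frac{\mu\bigl(A_m \cap I^{(k)}\bigr)}{\mu(I^{(k)})} \;=\; \frac{|F_{n_m}|}{|H_{n_m}|^k} \;\ge\; \frac{|E_{1,n_m}|}{|H_{n_m}|^k} \;\ge\; \delta_{n_m}^k.
\]
Since $\sum_m \delta_{n_m}^k = \infty$, the second Borel--Cantelli lemma guarantees $A_m$ occurs infinitely often almost surely. Hence the (random) enumeration $\tau_1(x) < \tau_2(x) < \cdots$ of those indices $m$ with $x_{n_m} \in F_{n_m}$ is well defined for $\mu$-a.e.\ $x \in I^{(k)}$, and in particular $\tau_1, \ldots, \tau_\gamma$ exist on a set of full measure.

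Second, conditional on $A_m$ the distribution of $x_{n_m}$ is uniform on $F_{n_m}$, so for each $\ell$,
\[
P\bigl(x_{n_m} \in E_{\ell,n_m} \,\big|\, A_m\bigr) \;=\; \frac{|E_{\ell,n_m}|}{|F_{n_m}|} \;\ge\; \frac{1}{D}.
\]
By the mutual independence of the $x_{n_m}$, conditioning on the realized values of $\tau_1, \ldots, \tau_\gamma$ leaves $x_{n_{\tau_1}}, \ldots, x_{n_{\tau_\gamma}}$ independent, each uniform on the corresponding $F_{n_{\tau_j}}$. Hence the conditional probability that the prescribed pattern occurs (the first $b_1$ of them lie in $E_{1,\cdot}$, the next $b_2$ in $E_{2,\cdot}$, and so on) is at least $D^{-\gamma}$, uniformly in the conditioning. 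Averaging over $\tau_1, \ldots, \tau_\gamma$ preserves the lower bound and yields \eqref{E:longreq} with $K(b_1,\ldots,b_{k-1}) := D^{-\gamma}$.

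The main point requiring care is justifying the independence structure of the expansion sequence and the measurability of the stopping times $\tau_j$; both follow directly from the cutting-and-stacking construction but should be made explicit. After that, the argument reduces to a routine application of Borel--Cantelli combined with conditioning on the times $\tau_j$.
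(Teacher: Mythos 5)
Your proposal is correct and follows essentially the same route as the paper: the paper's partition of $I^{(k)}$ into the sets $A_q$ (indexed by which $\gamma$-element set of indices realizes the first $\gamma$ visits to the $F_{n_m}$) is exactly your conditioning on the stopping times $\tau_1,\ldots,\tau_\gamma$, the second Borel--Cantelli lemma is used identically to show these are defined a.e., and the ratio $\prod|E_{\cdot,n_m}|/\prod|F_{n_m}|\ge D^{-\gamma}$ is the same computation as your conditional-probability bound. The only difference is presentational (probabilistic conditioning versus an explicit combinatorial partition), so no further comparison is needed.
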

\begin{proof}
Enumerate the set of all possible sets of $\gamma$ distinct, nonnegative integers by $R_q, \, q \in \N$. Let $A_q = \{x: \, \text{ the first } \gamma \text{ indices } m \text{ for which } x_{n_m} \in F_{n_m} \text{ are in }M + R_q\}$. Then clearly $\bigsqcup_q A_q$ forms a partition for the subset of points of $I^{(k)}$ having $x_{n_m}$ for at least $\gamma$ indices $m$ in $F_{n_m}$. To see that this subset is just $I^{(k)}$, recall that $|F_{n_m}| \ge \delta_{n_m}^k |H_{n_m}|^k$ by assumption. By the construction of rank-one transformations, we can treat the elements $x$ as elements of a probability space over infinite sequences of events (with the events being subsets of height sets). For $m \in \N$, set $F_{n_m}$ as  events with 
\[\mathbb{P}(F_{n_m})\equiv \frac{\mu(x: \, x_{n_m} \in F_{n_m})}{\mu(I^{(k)}) }= \frac{| F_{n_m}|}{|H_{n_m}|^k} \ge \delta_{n_m}^k;
\]
then the $F_{n_m}$ are independent insofar as the probability of their finite or countable intersections is just the product of the probabilities. Because $\sum_{m} \mathbb{P}(F_{n_m}) \rightarrow \infty$, the second Borel-Cantelli Lemma implies that the probability that $F_{n_m}$ occurs infinitely often is $1$, which is to say that $\mu(x :\, x_{n_m} \in F_{n_m} \text{ i.o.}) = \mu(I^{(k)})$, which shows that $\bigsqcup_q A_q = I^{(k)} \mod{\mu}$, i.e.\ almost every point of $I^{(k)}$ lies in $A_q$ for some $q$.

Now consider any fixed $q$. Call the set in \eqref{E:longreq} $W$, and let $R_q|_a^b$ be the subset of $R_q$ ranging from its $a^\text{th}$ highest element to (including) its $b^\text{th}$. Then we have 
\begin{align*}
\frac{\mu (W\cap R_q)}{\mu(I^{(k)})} & = \frac{\prod_{p \in R_q|_0^{b_1-1}} |E_{1,n_{M+p}}| \cdots \prod_{p \in R_q|_{b_{k-2}}^{\gamma-1}} |E_{k-1,n_{M+p}}| }{\prod_{p \in R_q|_0^{b_1-1}} |H_{n_{M+p}}|^k \cdots \prod_{p \in R_q|_{b_{k-2}}^{\gamma-1}} |H_{n_{M+p}}|^k }\\
\frac{\mu(R_q)}{\mu(I^{(k)})} & = \frac{\prod_{p \in R_q|_0^{b_1-1}} |F_{n_{M+p}}| \cdots \prod_{p \in R_q|_{b_{k-2}}^{\gamma-1}} |F_{n_{M+p}}| }{\prod_{p \in R_q|_0^{b_1-1}} |H_{n_{M+p}}|^k \cdots \prod_{p \in R_q|_{b_{k-2}}^{\gamma-1}} |H_{n_{M+p}}|^k }\\
\frac{\mu(W \cap R_q)}{\mu(R_q)}& = \frac{\prod_{p \in R_q|_0^{b_1-1}} |E_{1,n_{M+p}}| \cdots \prod_{p \in R_q|_{b_{k-2}}^{\gamma-1}} |E_{k-1,n_{M+p}}| }{\prod_{p \in R_q|_0^{b_1-1}} |F_{n_{M+p}}| \cdots \prod_{p \in R_q|_{b_{k-2}}^{\gamma-1}} |F_{n_{M+p}}|}\\
& \ge D^{-\gamma}.
\end{align*}
Because the $R_q$ are disjoint, we have 
\begin{align*}
\mu(W) = \mu\left( \bigsqcup_q (W \cap R_q) \right) \ge D^{-\gamma} \mu\left( \bigsqcup_q R_q \right)  = D^{-\gamma} \mu\left( I^{(k)} \right). 
\end{align*}
We finish by taking $K(b_1,\ldots , b_{k-1}) = D^{-\gamma}$. 

\end{proof}

\begin{proposition} \label{P:chaconergcond}
Let $T$ be a rank-one transformation such that there is a sequence $(n_m)_{m\in \N}$ indexing positive integers $\left(z_{n_m}\right)_{m \in \N}$ such that 
\begin{align*}
&\left|S_{n_m}\left(z_{n_m}\right)\right| = \delta_{n_m} \left| H_{n_m}\right| \text{ and }\\
&\left|S_{n_m}\left(z_{n_m} + 1\right)\right| = \delta_{n_m} \left| H_{n_m}\right|
\end{align*}
for a sequence $\left(\delta_{n_m} \right)_{m \in\N}$ satisfying $\sum_{m \in \N} \delta_{n_m}^k \rightarrow \infty$. Then all $k$-fold products of $T$ of the form: 
\begin{align} \label{E:kfoldprod}
\underbrace{T \times\cdots \times T}_{t \text{ times}} \times \underbrace{T^{-1} \times \cdots \times T^{-1}}_{k-t \text{ times }},
\end{align}
with $0 \le t \le k$, are ergodic. 
\end{proposition}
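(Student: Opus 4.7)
The plan is to verify the sufficient condition of Proposition \ref{P:arbproduct} with $\alpha_\ell \in \{+1, -1\}$, where the first $t$ entries are $+1$ and the remaining $k - t$ are $-1$. Fix $i \in \N$ and heights $(b_0, \ldots, b_{k-1})$ with $0 \le b_\ell < h_i$, let $B = \max_\ell b_\ell$, and set $N = 2B$ active stages $m \in \{M, \ldots, M + N - 1\}$ (adjusting $B$ upward if $B = 0$, to ensure $N \ge 1$). The strategy is to produce a common shift $n = Z + B$, where $Z = \sum_{m=M}^{M+N-1} z_{n_m}$, and show that a positive fraction of starting descendant tuples $(d_0, \ldots, d_{k-1})$ admit complementary target tuples satisfying $a_\ell - d_\ell = \alpha_\ell n + b_\ell$; since $Z > 0$, the non-degeneracy $n \ne 0$ holds automatically.

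The shift budget decomposes stage by stage. In each active stage $n_m$ and each coordinate $\ell$, membership of the starting descendant piece in $S_{n_m}(z_{n_m})$ enables a per-stage shift of $\alpha_\ell z_{n_m}$, and $S_{n_m}(z_{n_m}+1)$ enables $\alpha_\ell(z_{n_m}+1)$; the choice of sign is available because $T$ is invertible, so the ``shift by $-z$'' sets have the same cardinality as $S_{n_m}(z)$. Writing each per-stage shift as $\alpha_\ell(z_{n_m} + \delta_{\ell, m})$ with $\delta_{\ell, m} \in \{0, 1\}$, the total shift in coordinate $\ell$ is $\alpha_\ell(Z + \sum_m \delta_{\ell, m})$. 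Matching this to $\alpha_\ell(Z + B) + b_\ell$ gives row-sum constraints $\sum_m \delta_{\ell, m} = B + b_\ell$ when $\alpha_\ell = +1$ and $\sum_m \delta_{\ell, m} = B - b_\ell$ when $\alpha_\ell = -1$; both lie in $[0, N]$, so a $k \times N$ binary pattern matrix $(\delta_{\ell, m})$ realizing them exists. The columns of this matrix prescribe a sequence of $N$ ``types'' in $\{0,1\}^k$ that we wish the first $N$ good stages of $x$ to follow.

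To secure a positive-measure set of $x \in I^{(k)}$ realizing this pattern, I would apply Lemma \ref{L:bound} with $E_{\sigma, n_m} \subset H_{n_m}^k$ the set of $k$-tuples whose $\ell$-th coordinate lies in the signed shift-$(z_{n_m} + \sigma_\ell)$ subset (one such $E_\sigma$ per type), and $F_{n_m}$ their union. By independence across coordinates, $|E_{\sigma, n_m}| = \delta_{n_m}^k |H_{n_m}|^k$, and $|F_{n_m}| \le 2^k |E_{\sigma, n_m}|$; since $\sum_m \delta_{n_m}^k = \infty$ by hypothesis, the lemma delivers a positive-measure set of $x$'s whose first $N$ good stages realize the prescribed type sequence in the prescribed order. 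For each such $x$, setting $a_\ell := d_\ell + \alpha_\ell n + b_\ell$ yields a one-to-one, measure-preserving correspondence meeting \eqref{E:ergeq}, and Proposition \ref{P:arbproduct} concludes ergodicity. The main obstacle is simultaneity across coordinates: a single good stage must admit the correct type in all $k$ coordinates at once, forcing a per-stage cost of $\delta_{n_m}^k$ rather than $\delta_{n_m}$; the hypothesis $\sum_m \delta_{n_m}^k = \infty$ is calibrated exactly to absorb this exponent loss, and once it is in place the combinatorial bookkeeping of the $k \times N$ pattern matrix becomes a straightforward Borel--Cantelli computation.
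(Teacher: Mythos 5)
Your overall route is the same as the paper's: reduce to the sufficient condition of Proposition \ref{P:arbproduct}, decompose each offset $b_\ell$ into unit increments accumulated over ``active'' stages where the descendant pieces land in $S_{n_m}(z_{n_m})$ versus $S_{n_m}(z_{n_m}+1)$ (with the translated copies for the $T^{-1}$ coordinates), and invoke Lemma \ref{L:bound} to get a set of tuples of measure bounded below by a constant depending only on the $b_\ell$'s. Your $k\times N$ binary pattern matrix with prescribed row sums is a mild repackaging of the paper's conditions $[1]_\ell$--$[3]_\ell$, which order the active stages into blocks with exactly one distinguished coordinate per stage; either bookkeeping works.

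There is, however, a genuine gap at the injectivity step. You assert a ``common shift $n = Z + B$'' with $Z = \sum_{m=M}^{M+N-1} z_{n_m}$, which presupposes that the active stages are exactly $\{M,\ldots,M+N-1\}$. But if you fix the active stages in advance, the measure of the good set is on the order of $\prod_{m=M}^{M+N-1}\delta_{n_m}^k$, which depends on $i$ (through $M$) and can degenerate when $\delta_{n_m}\to 0$; this does not furnish the constant $\beta(b_0,\ldots,b_{k-1})$ that Proposition \ref{P:arbproduct} demands. The whole point of Lemma \ref{L:bound} -- which you do invoke -- is to avoid this by letting the active stages be the \emph{first $N$ stages at which the point lands in $F_{n_m}$}, and these stages vary from tuple to tuple. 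Consequently $Z$, and hence the shift $n$, is \emph{not} constant on the good set $W$, and injectivity of the assignment $a\mapsto d$ is no longer automatic: two tuples whose active stages differ could a priori be sent to the same complementary tuple. The paper closes exactly this hole with its final paragraph (showing that if $a$ and $a'$ are assigned the same $d$, then they lie in the same partition block of Lemma \ref{L:bound} and hence coincide); your proposal needs an argument of this kind, and the justification it currently offers for one-to-one-ness rests on the false premise that $n$ is the same for all good tuples.
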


\begin{proof}
Assume that $t \ge 1$ (this is without loss of generality, because the $k$-fold product of $T^{-1}$ is ergodic if and only if it is for $T$). By Proposition \ref{P:arbproduct}, the result follows if we show that for any $\ve > 0$, $(b_0,\ldots,b_{k-1}) \in \{0,\ldots,h_i -1 \}^{k}$, and $i \in \N$ (with $I$ the base of $C_i$), there exists a $j > i$ such that to at least $(1-\ve)\big| D(I,j) \big|^{k}$ of the $k$-tuples $(a_0,\ldots,a_{k-1}) \in D(I,j)^{k}$, we can associate unique $k$-tuples $(d_0,\ldots,d_{k-1}) \in D(I,j)^{k}$ satisfying $a_0 - d_0 - b_0 = a_\ell - d_\ell - b_\ell$ for $\ell = 1,\ldots,t-1$ and $a_0 - d_0 - b_0 = d_\ell - a_\ell + b_\ell$ for $\ell = t,\ldots,k-1$. Set a vector $(b_0,\ldots , b_k)$. For ease of notation, reorder the indices so that all $\ell \le t-1$ satisfying $b_0 \le b_\ell$ fall in the range $0,\ldots , s-1$, for $s \le t$. Then we may assume by adding $b_0$ to all equations that $b_0 = 0$, $b_\ell \ge 0 $ for all $\ell$ satisfying $1 \le \ell \le s-1$ or $\ell \ge t$, and $b_\ell < 0 $ for all other $\ell$. 

Let $M=\min\{m: \, n_m \ge i\}$ and $M'$ be high enough such that there exist tuples $(a_0,\ldots , a_{k-1}) \in D(I,n_{M'})^{k-1}$ satisfying the following properties $[1]_\ell$ through $[3]_\ell$; note that the indices $m$ satisfying these $k$ conditions are necessarily distinct. 
\begin{align} \label{E:tupledef}
\text{For all }&\ell \in \{1,\ldots, s-1\}:\\
[1]_\ell: \, &a_{\ell',n_m} \in S_{n_m}(z_{n_m}) \text{ for all }\ell' \neq  \ell \text{ and } a_{\ell,n_m} \in S_{n_m}(z_{n_m} + 1) \nonumber\\
& \text{for at least } b_\ell \text{ indices } m \in \{M,\ldots , M'\}\nonumber\\
\nonumber\\
\text{For all }&\ell \in \{s,\ldots, t-1\}:\nonumber\\
[2]_\ell:\, &a_{\ell',n_m} \in S_{n_m}(z_{n_m}+1) \text{ for all }\ell'\neq \ell \text{ and } a_{\ell,n_m} \in S_{n_m}(z_{n_m})\nonumber\\
& \text{for at least } -b_\ell \text{ indices } m \in \{M,\ldots , M'\}
\nonumber\\
\nonumber\\
\text{For all }&\ell \in \{t,\ldots, k-1\}:\nonumber\\
[3]_\ell:\, &a_{\ell',n_m} \in S_{n_m}(z_{n_m}+1) \text{ for all }\ell' \neq \ell \text{ and } a_{\ell,n_m} \in S_{n_m}(z_{n_m}) - z_{n_m}\nonumber\\
& \text{for at least } b_\ell \text{ indices } m \in \{M,\ldots , M'\}.\nonumber
\end{align}
For such a tuple, we can uniquely assign a complementary tuple $(d_0,\ldots , d_k)$ meeting the product ergodicity condition of Proposition \ref{P:arbproduct}. For all $\ell \in \{1,\ldots, s-1\}$, do the following: for the first $b_\ell$ indices $m$ satisfying $[1]_\ell$, set $d_{\ell,n_m} = a_{\ell, {n_m}} - z_{n_m} - 1 $ and $d_{0,n_m} = a_{\ell, {n_m} } - z_{n_m}$. Repeat similarly for $\ell \in \{s,\ldots , t-1\}$ and $\ell \in \{t,\ldots , k-1\}$ but replace the differences according to index (e.g.\ when $a_{\ell,q}$ is chosen to be in $S_{q} (z_{q}) - z_{q}$, set $d_{\ell,q} = a_{\ell,q} + z_q$). Then, everywhere $d_{0,q}$ has been assigned but $d_{\ell,q}$ has not, choose $d_{\ell,q}$ such that $a_{0,q} - d_{0,q} = a_{\ell,q} - d_{\ell,q}$ (for $\ell \le t-1$) or $a_{0,q} - d_{0,q} = d_{0,q} - a_{\ell,q}$ (for $\ell \ge t$). Elsewhere, set $d_{\ell,q} = a_{\ell,q}$. 

Now we use Lemma \ref{L:bound} to show that there is a set $W\subset I^{(k)} $ of size $\mu(W) \ge K(b_1,\ldots, b_{k-1}) \mu(I^{(k)} )$ such that the assignment of $d$ rectangles to $a$ rectangles on $W$ is unique. In the statement of Lemma \ref{L:bound}, take $E_{\ell,n_m}$ to be the $k$-tuple in $H_{n_m}^k$ meeting the condition relevant for $\ell$ in \eqref{E:tupledef}. For instance, $E_{1,n_m}$ is the subset of $H_{n_m}^k$ of tuples with $a_{\ell',n_m} \in S_{n_m}(z_{n_m}) \text{ for all }\ell' \neq  \ell \text{ and } a_{\ell,n_m} \in S_{n_m}(z_{n_m} + 1)$. Also, take 
\[F_{n_m} = \Big(S_{n_m} (z_{n_m}) \, \cup \, S_{n_m} (z_{n_m} +1 ) \, \cup \, S_{n_m} (z_{n_m})  - z_{n_m}\, \cup  \, S_{n_m} (z_{n_m}+1)  - z_{n_m} -1 \Big)^k.
\]
Then $|F_{n_m}| \le 4^k |E_{\ell,n_m}|$, and $|E_{\ell,n_m}| = |S_{n_m}(z_{n_m})|^k = \delta_{n_m}^k |H_{n_m}|^k$. Lemma \ref{L:bound} implies we can use $K(b_1,\ldots , b_{k-1}) = 4^{-k \sum_{\ell= 1}^{k-1} b_\ell}$. 

Now suppose that $a=(a_0,\ldots, a_{k-1})^k$ and $a'= (a_0',\ldots , a_{k-1}')^k$ are assigned to the same $d$ rectangle. It must be the case that every time $a_q = (a_{0,q},\ldots , a_{k-1,q})$ is assigned a $d_q \neq a_q$, we have $d_q \in F_q$. If $a_q' = d_q$ in this stage, then we have $a_q' \in F_q$, implying by definition of $W$ that $a'$ meets conditions $[1]_\ell$ through $[3]_\ell$ in indices strictly below $q$. But everywhere that $a_p' \neq d_p$ we must have $a_p \in F_p$, and in indices $p < q$, $a$ does not meet conditions $[1]_\ell$ through $[3]_\ell$ (else, we would not have $a_q \neq d_q$). This contradicts that $a$ meets conditions $[1]_\ell\text{---}[3]_\ell$ in the first $\gamma$ indices in which it is in $F_p$. So $a_q \neq d_q \iff a_q' \neq d_q$, i.e.\ $a$ and $a'$ belong to the same partition block in Lemma \ref{L:bound}. Thus, it is straightforward to check by the definition of $W$ and the algorithm assigning $d$ rectangles to $a$ rectangles that $a = a'$. Proposition \ref{P:arbproduct} finishes, with $\beta(b_0,\ldots, b_{k-1})$ given by $K(b_1- b_0,\ldots, b_0 - b_s,\ldots, b_0 + b_t,\ldots )$. 

\end{proof}

\begin{corollary} \label{C:fixeddelta}
If $\delta_{n_m} \ge \delta > 0$ for all $m$, then $T$ has infinite symmetric ergodic index. 
\end{corollary}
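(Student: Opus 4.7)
The plan is to reduce the claim to a direct application of Proposition \ref{P:chaconergcond}. Recall that $T$ has infinite symmetric ergodic index precisely when every finite Cartesian power of $T \times T^{-1}$ is ergodic. For any fixed $n \in \N$, the $n$-fold Cartesian power
\[
(T \times T^{-1})^{(n)} = \underbrace{T \times \cdots \times T}_{n \text{ times}} \times \underbrace{T^{-1} \times \cdots \times T^{-1}}_{n \text{ times}}
\]
(after a harmless reordering of coordinates) is exactly a $k$-fold product of the form \eqref{E:kfoldprod} with $k = 2n$ and $t = n$. So it suffices to verify, for each $n$, the hypothesis of Proposition \ref{P:chaconergcond} at the exponent $k = 2n$.

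The only condition that needs checking is the divergence $\sum_{m} \delta_{n_m}^{k} = \infty$, since the sequences $(z_{n_m})$ and the level counts of $S_{n_m}(z_{n_m})$ and $S_{n_m}(z_{n_m}+1)$ are already given by assumption. Under the uniform lower bound $\delta_{n_m} \ge \delta > 0$, we immediately obtain $\delta_{n_m}^{k} \ge \delta^{k} > 0$ for every fixed $k$, so the series diverges trivially. Proposition \ref{P:chaconergcond} then gives ergodicity of the $2n$-fold product above, hence of $(T \times T^{-1})^{(n)}$.

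Since $n \in \N$ was arbitrary, all finite Cartesian powers of $T \times T^{-1}$ are ergodic, which is the definition of $T$ having infinite symmetric ergodic index. There is no genuine obstacle here: the work was done in Proposition \ref{P:chaconergcond}, and the corollary is essentially the observation that a uniform positive lower bound on $\delta_{n_m}$ kills the summability condition for every $k$ simultaneously, allowing us to invoke the proposition at arbitrarily large exponents.
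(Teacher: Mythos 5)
Your proof is correct and is exactly the intended argument: the paper states Corollary \ref{C:fixeddelta} without proof precisely because, as you observe, the uniform bound $\delta_{n_m}\ge\delta>0$ makes $\sum_m \delta_{n_m}^k$ diverge for every $k$, so Proposition \ref{P:chaconergcond} applies with $k=2n$ and $t=n$ to give ergodicity of $(T\times T^{-1})^{(n)}$ for all $n$.
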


In the case of infinite rank-one integer actions, Proposition \ref{P:chaconergcond} implies claim 1 of Theorem 0.1 in \cite{Da16} (consider $f = (0,1,\ldots , 0,1)$). Other well known examples, such as the main construction in \cite{AdSi16} and the infinite measure Chacon transformation with $2$ or more cuts, satisfy the conditions of Corollary \ref{C:fixeddelta} to obtain infinite ergodic index. To address the notion of mixing for rank-one transformations, we require the following lemma. Its proof is similar to that of Claim 6, \cite{Da16}. We say that $T$ is {\bf Koopman mixing} if for all sets $A,B$ of finite measure, $\lim_{n\to\infty}\mu(T^nA\cap B)=0$; in the finite measure preserving case this is equivalent to the spectral characterization of (strong) mixing, and in infinite measure it is also known as zero-type.

\begin{lemma} \label{L:mixing}
Let $T$ be a rank-one transformation with height sets $H_n = S_n \sqcup R_n$ chosen such that $|S_n| < \delta_n |H_n|$ with $\delta_n \rightarrow 0$ and $|H_n| \rightarrow \infty$, and such that if $x,y,z,z' \in H_n$ with $x \in R_n, \, x \neq y$ and $(z,z') \neq (x,y)$, then $|x - z - y  + z'| \ge 2 h_n$. If $T$ adds at least $\max\{ H_n\} + h_n$ spacers on the right subcolumn for each $n$, then it is Koopman mixing.
\end{lemma}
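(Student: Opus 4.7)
The plan is to prove Koopman mixing by the standard reduction to levels: since rectangles $I$ from columns generate the measurable sets of finite measure, it suffices to show $\mu(T^n I \cap I) \to 0$ as $n \to \infty$ for any single level $I$ of a column $C_i$. By time reversal I may focus on $n \to +\infty$ and fix such an $I$.

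For large $n$, pass to the stage $N$ with $h_{N-1} < n \le h_N$. The large-spacer assumption $s_{N-1,\,r_{N-1}-1} \ge \max H_{N-1} + h_{N-1}$ gives $h_N \ge 2(h_{N-1} + \max H_{N-1})$, while $D(I,N) = H_{N-1} + D(I,N-1)$ is contained in $[0,\, h_{N-1} + \max H_{N-1}] \subset [0,\, h_N/2]$. Thus every descendant of $I$ in $C_N$ sits in the bottom half of column $C_N$, and for $x$ in a level $L_h$ of $C_N$ with $h \in D(I,N)$, either $h+n < h_N$ (so $T^n x \in L_{h+n}$, and $T^n x \in I$ iff $h+n \in D(I,N)$), or $h+n \ge h_N$ (putting $T^n x$ into spacers added after stage $N$, disjoint from $I \subset C_N$, and contributing nothing). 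This gives
\[
\mu(T^n I \cap I) \;=\; \frac{\mu(I)}{|D(I,N)|}\,\bigl|\{h \in D(I,N) : h+n \in D(I,N)\}\bigr|,
\]
reducing the task to bounding the count on the right.

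Write $h = y + d$ and $h + n = y' + d'$ with $y, y' \in H_{N-1}$ and $d, d' \in D(I,N-1)$, giving $y' - y = n - (d' - d)$. Since $|d' - d| \le \max D(I,N-1) < h_{N-1}$, the difference $y' - y$ is pinned inside a window of width $<2h_{N-1}$ around $n$. I then apply the Sidon-type hypothesis to $H_{N-1}$: if $(y, y')$ and $(z, z')$ are two admissible pairs then $|(y - y') - (z - z')| < 2h_{N-1}$, so by the hypothesis (applied with $y \in R_{N-1}$ and $y \ne y'$, the latter guaranteed by $n > h_{N-1} > |d' - d|$) one must have $(z, z') = (y, y')$. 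Hence at most one admissible pair $(y, y')$ can have $y \in R_{N-1}$, while those with $y \in S_{N-1}$ number at most $|S_{N-1}| \cdot |H_{N-1}|$. Multiplying by the trivial bound $|D(I,N-1)|$ on the pairs $(d, d')$ of any prescribed difference and dividing by $|D(I,N)| = |H_{N-1}| \cdot |D(I,N-1)|$ yields a ratio $O(\delta_{N-1}) + O(1/|H_{N-1}|)$, which tends to $0$ as $N \to \infty$ (and $N \to \infty$ with $n \to \infty$).

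The main obstacle will be extracting the clean Sidon-type bound from the stated hypothesis, handling the degenerate subcase $y = y'$ (excluded here by $n > h_{N-1}$) and the requirement that descendants have unique representation in $H_{N-1} + D(I, N-1)$, which should follow inductively from the same hypothesis applied at lower stages. A secondary, more mechanical point is justifying the exit-the-column case: since $I \subset C_N$ for every $N \ge i$, any $T^n x$ lying in spacers added strictly after stage $N$ is automatically disjoint from $I$, and cannot contribute to the intersection.
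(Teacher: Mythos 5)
Your overall strategy is the same as the paper's---use the near-Sidon hypothesis on the height set to show that at most one ``$R$-indexed'' pair of subcolumns can produce a coincidence, and use $|S_n|<\delta_n|H_n|$ to bound the rest---but two steps do not go through as written. The more serious one is the dismissal of the exit case. With your window $h_{N-1}<n\le h_N$ and the bound $D(I,N)\subset[0,h_N/2]$, a point at height $h\in D(I,N)$ with $h+n>h_N$ leaves the top of $C_N$, but it does \emph{not} necessarily land in spacers: inside $C_{N+1}$ it sits at height $w+h$ for some $w\in H_N$, and $T^n$ can carry it to a descendant level $w'+h'$ of a \emph{different} subcolumn copy of $D(I,N)$. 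The lemma only guarantees a large spacer block on the rightmost subcolumn, so the smallest positive gap in $H_N$ can be as small as $h_N+1$, while $h+n-h'$ can be as large as $3h_N/2$; such cross-subcolumn returns are therefore possible and must themselves be controlled by the hypothesis on $H_N$ (this is precisely the half of the argument you have only carried out one stage down, for $H_{N-1}$). The paper sidesteps this by windowing $\max D(F,n)\le m\le \max D(F,n+1)$ and working entirely inside $C_{n+1}$: that choice of window simultaneously forces $v+m\le h_{n+1}$ for every $v\in D(F,n+1)$ (no exit) and prevents any subcolumn copy of $D(F,n)$ from meeting its own translate (no same-subcolumn coincidences), so that the only contributions are the cross-subcolumn pairs $(x,y)\in H_n^2$, which the two hypotheses then handle. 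You should re-window the same way.

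The second issue is arithmetic: in the $S$-part you bound the number of admissible pairs $(y,y')$ with $y\in S_{N-1}$ by $|S_{N-1}|\cdot|H_{N-1}|$ and then multiply by $|D(I,N-1)|$; after dividing by $|D(I,N)|=|H_{N-1}|\cdot|D(I,N-1)|$ this gives a ratio of $|S_{N-1}|$, which is bounded by $\delta_{N-1}|H_{N-1}|$, not by $\delta_{N-1}$, and need not tend to $0$. The fix is to count the heights $h$ directly through their unique decomposition $h=y+d$: the set of $h\in D(I,N)$ whose $H_{N-1}$-component lies in $S_{N-1}$ has cardinality at most $|S_{N-1}|\cdot|D(I,N-1)|\le\delta_{N-1}|D(I,N)|$, with no extra factor of $|H_{N-1}|$. (This is in effect how the paper bounds the $S_n$-indexed contribution, by the total measure of the $S_n$-indexed copies rather than by a pair count.) With the re-windowing and this corrected count, your argument matches the paper's.
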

\begin{proof}
Let $F$ be a collection of levels in column $C_i$. For any $n \ge i$, $D(F,n+1)$ consists of $|H_n|$ copies of $D(F,n)$ indexed by $H_n$. We claim that for $m$ satisfying:
$
\max D(F,n) \le m \le \max D(F,n+1),
$
we have 
\begin{align}\label{E:ineqHn0}
\mu(T^m(F) \cap F) \le \max \{ |H_n|^{-1}, \delta_n \} \mu(F)
\end{align}
Note that such an $m$ ensures that a copy of $D(F,n)$ in $D(F,n+1)$ cannot intersect with itself. Observe that for such $m$ (by addition of the absorbing spacers on the right subcolumn)
\begin{align} \label{E:ineqHn}
\mu(T^m(F) \cap F) \le \mu(F) \frac{\left| (x,y) \in H_n^2: \, (D(F,n) + x + m )  \cap D(F,n) + y \neq \emptyset \right|}{|H_n|}.
\end{align}
If one of $\{x,y\}$ is in $R_n$, then the condition on $R_n$ implies that $(D(F,n) + z + m) \cap D(F,n) + z' = \emptyset$ for all $(z,z') \neq (x,y)$, so the right side of \eqref{E:ineqHn} can be bounded by $|H_n|^{-1} \mu(F)$. On the other hand, the maximal intersection between $S_n$-indexed copies of $D(F,n)$ has size $|D(F,n+1)|\frac{|S_n|}{|H_n|} \le \delta_n |D(F,n+1)|$. It follows that for such an $m$, $\big|(m+D(F,n+1))\cap D(F,n+1) \big| \le \max \{ |H_n|^{-1}, \delta_n \}|D(F,n+1)|$, so \eqref{E:ineqHn0} holds. As $\max \{ |H_n|^{-1}, \delta_n \} \rightarrow 0$, $T$ is mixing for collections of levels, whence mixing for arbitrary sets (see e.g.\ Theorem 9.6 in \cite{LoSi17}). 
\end{proof}

With the use of Proposition \ref{P:chaconergcond}, we are able to answer Question 1 of Danilenko in \cite{Da16}:

\begin{theorem} \label{T:asymmprods}
Fix any $k \ge 1$ and $p \ge k$ (including $p = \infty$). Then there exists a Koopman mixing rank-one transformation $T$ such that all $k$-fold products of the form \eqref{E:kfoldprod} are ergodic, $T$ has ergodic index $k$, and $T$ has conservative index $p$. 
\end{theorem}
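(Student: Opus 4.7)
The plan is to construct $T$ by a single rank-one cutting-and-stacking whose stages split into three interleaved roles. Along a first sparse subsequence $(n_m)$ I would insert the shifted-copy structure $S_{n_m}(z_{n_m})\cup S_{n_m}(z_{n_m}+1)$ of Proposition \ref{P:chaconergcond}, with relative densities $\delta_{n_m}$ calibrated so that $\sum_m\delta_{n_m}^{k}=\infty$ but $\sum_m\delta_{n_m}^{k+1}<\infty$ (for instance $\delta_{n_m}\sim m^{-1/k}$). Along a second, independent sparse subsequence $(n'_{m'})$ I would insert additional matching structure of the form controlled by Proposition \ref{P:conservprod2}, with densities $\delta'_{n'_{m'}}$ tuned so that $\sum_{m'}(\delta'_{n'_{m'}})^{p}=\infty$; when $p=\infty$ I would arrange this sum to diverge for every finite exponent. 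At every stage I would choose the remaining spacers, including the right-column spacer count $\max H_n+h_n$, to satisfy the separation hypothesis, the $|S_n|/|H_n|\to 0$ hypothesis and the right-subcolumn hypothesis of Lemma \ref{L:mixing}.

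With these three ingredients in place, the positive half of the theorem follows directly from earlier results: Proposition \ref{P:chaconergcond} applied along $(n_m)$ gives ergodicity of every $k$-fold product of the form \eqref{E:kfoldprod}, so in particular $T^{(k)}$ is ergodic and the ergodic index is at least $k$; Proposition \ref{P:conservprod2} applied along $(n'_{m'})$ gives conservativity of $T^{(p)}$; and Lemma \ref{L:mixing} gives Koopman mixing of $T$.

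The delicate step is cutting the ergodic and conservative indices off at exactly $k$ and $p$. For the conservative cutoff I would use the explicit wandering-set construction of Section \ref{S:Chacon-npc}: by choosing the cut counts $r_n$ and spacer lengths so that the critical threshold proved there falls at $p+1$, one obtains a positive-measure wandering set for $T^{(p+1)}$ and hence non-conservativity. For the ergodic cutoff, when $p=k$ the non-ergodicity of $T^{(k+1)}$ is free from its non-conservativity. When $p>k$ the product $T^{(k+1)}$ is conservative and must be actively obstructed; here the plan is to exploit the subcritical sum $\sum_m \delta_{n_m}^{k+1}<\infty$ via a first Borel--Cantelli argument of the type used in Lemma \ref{L:bound}, to produce a positive-measure set on which the $(k+1)$-fold synchronization required by the necessary condition of Proposition \ref{P:arbproduct} is never achieved along $(n_m)$, and then to promote this set to a $T^{(k+1)}$-invariant set by a coordinate-symmetry argument.

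The main obstacle is compatibility: the three parameter sequences $\delta_{n_m}$, $\delta'_{n'_{m'}}$ and the mixing-stage spacer counts must be mutually calibrated so that all four requirements---ergodicity of every $k$-fold mixed product, ergodic index exactly $k$, conservative index exactly $p$, and Koopman mixing---hold simultaneously, and so that the matchings supplied at the conservativity stages do not inadvertently lift conservativity of $T^{(p+1)}$ back up, nor the ergodicity matchings lift ergodicity of $T^{(k+1)}$. Verifying that this tuning is achievable, especially accommodating the asymmetric gap $p>k$ (including $p=\infty$) and preserving the quantitative separation hypothesis of Lemma \ref{L:mixing} against the coarse additive structure of $S_{n_m}$, is the technical heart of the argument.
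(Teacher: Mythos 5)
Your overall architecture matches the paper's: interleave two roles among the stages, use the $S_{n_m}(z_{n_m})\cup S_{n_m}(z_{n_m}+1)$ structure with $\sum_m\delta_{n_m}^k=\infty$, $\sum_m\delta_{n_m}^{k+1}<\infty$ to get ergodicity of all $k$-fold mixed products via Proposition \ref{P:chaconergcond} and (by first Borel--Cantelli plus a separation condition) non-ergodicity of $T^{(k+1)}$, and invoke Lemma \ref{L:mixing} for Koopman mixing. The paper indeed takes $(n_m)$ to be the odd stages and reserves the even stages for the conservativity calibration.

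There is, however, a genuine gap in your mechanism for pinning the conservative index at $p$. You propose to import the ``critical threshold'' of Section \ref{S:Chacon-npc}, but Lemma \ref{L:dminusdap} shows non-conservativity of $T\times T^2\times\cdots\times T^{\kappa+1}$, a product of \emph{distinct} powers; its engine is the absence of arithmetic progressions $(a-d),2(a-d),\ldots,(\kappa+1)(a-d)$ in $D(I,n)-D(I,n)$. For the straight Cartesian power $T^{(p+1)}$ the matching condition of Proposition \ref{P:conservprod2} asks only for a single common difference $a_\ell-d_\ell=n\neq 0$ repeated in every coordinate, so no nontrivial AP appears and that machinery says nothing. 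Indeed the infinite Chac\'{o}n transformations of Section \ref{S:Chacon-npc} have infinite ergodic index, hence $T^{(p)}$ conservative for \emph{every} $p$, so that construction cannot produce a finite conservative index. The paper's actual device is different: at even stages it uses height sets of $r_n$ strongly separated elements (differences of differences exceed $2h_n$), which forces any matchable $(p+1)$-tuple to lie on the diagonal of $H_n^{p+1}$ at all but finitely many even stages; the cut numbers are then chosen so that $\prod_i(1-r_i^{-p})>0$ (a positive fraction of $(p+1)$-tuples never hit the diagonal, killing conservativity of $T^{(p+1)}$) while $\prod_i(1-r_i^{-(p-1)})=0$ ($p$-tuples hit the diagonal infinitely often almost surely, giving conservativity of $T^{(p)}$). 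Your ``$\sum(\delta')^p=\infty$'' gestures at the right Borel--Cantelli threshold for the positive direction but does not supply the negative one. Relatedly, for the non-ergodicity of $T^{(k+1)}$ your Borel--Cantelli step only locates a positive-measure set of tuples avoiding $S_{n_m}^{k+1}$; to conclude that no matching exists you also need the strong separation condition imposed on the complement $R_n=H_n\setminus S_n$ at the odd stages (and on the even height sets), together with a height vector $b$ with unequal small entries, as in the paper. With those two repairs your outline becomes essentially the paper's proof.
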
 
\begin{proof}
Let $p < \infty$ to start. 
In the previous construction, take $n_m$ to be the odd numbers. For every even $n$, take $H_n$ to be a set of $r_n$ elements satisfying $x,y,z,z' \in H_n \text{ with } (x,y) \neq (z,z') \implies |x-z-y+z'| \gg 2 h_n \ge 2 \max D(I_0,n)$ (i.e.\ if the difference of differences is nonzero, then it is large). 

For ease of notation, let $S_n = \big(S_n(z_n) \cup \ldots \cup S_n(z_{n}+1)-z_n-1 \big)$. For every odd $n$, choose $R_n$ and $H_n = R_n \sqcup S_n$ such that $x,y,z,z' \in H_n \text{ with } x \in R_n, (x,y) \neq (z,z') \, \implies |x-z-y+z'| \gg 2 h_n \ge 2 \max D(I_0,n)$. This is a stronger version of condition (2-1) in \cite{Da16}, and similar to the restriction discussed in Remark 1, \cite{CFKLPS}. Add $\max \{H_n\}+ h_n$ spacers on the rightmost subcolumn for every $n$, and choose $\delta_n$ such that $\sum_{n \text{ odd}} \delta_n^k \rightarrow \infty$ but $\sum_{n \text{ odd}} \delta_n^{k+1} < \infty$. Lastly, pick $(r_n)$ such that $\prod_{i=1}^\infty \left( 1 - \frac{1}{r_i^{p-1}} \right) = 0$ but $\prod_{i =1}^\infty \left( 1 - \frac{1}{r_i^{p}} \right)> 0$. Throughout, we let $\text{diag}_k(A)$ denote the set of $k$-tuples $\{(x,\ldots , x): \, x \in A\}$. 

First, we argue that $T^{(p+1)}$ is not conservative. By selection of $\delta_n$ and $r_n$, there exists an $i$ so high and $I$ a level of $C_i$ such that 
\begin{align*}
\Big|(&H_{i}^{p+1} \setminus S_i^{p+1} + H_{i+1}^{p+1} + H_{i+2}^{p+1} \setminus S_{i+2}^{p+1} +\cdots )\\
 &\cap (H_i^{p+1}  \setminus 
\text{diag}_{p+1}(H_i) + \cdots + H_{j-1}^{p+1}\setminus 
\text{diag}_{p+1}(H_{j-1})  \Big|\\
&> \ve |D(I,j)|^{p+1}
\end{align*}
for all $j \ge i$. Assume that $(a_0,\ldots , a_{p})$ is an element of the above set in $D(I,j)^{p+1}$. If there is a complementary descendant tuple $(d_0,\ldots , d_{p})$ such that $a_0 - d_0 = \cdots = a_p - d_p \neq 0$, there exists a highest $q \le j-1$ such that $a_{\ell,q} - d_{\ell,q}$ are not all $0$. Fix an $\ell$ such that this is true; by assumption, there is another $\ell'$ such that $a_{\ell,q} \neq a_{\ell',q}$. Since $d_{\ell,q} \neq a_{\ell,q}$, the inequality 
\[
|a_{\ell,q} - d_{\ell,q} - a_{\ell',q} + d_{\ell',q} | \gg 2 \max D(I_0,q) \ge 2 \max D(I,q)
\]
holds, and it is impossible to have $a_{\ell,q}- d_{\ell,q} = a_{\ell',q} - d_{\ell',q}$, as was the case of Lemma 3.2 in \cite{CFKLPS}. Since $\ve$ is a fixed constant, Proposition \ref{P:conservprod2} implies that $T^{(p+1)}$ is not conservative. However, $T^{(p)}$ is conservative by Theorem 3.2 in \cite{LoSi17}.

Now we show that $T^{k+1}$ is not ergodic. A similar result is proven for Theorem 0.1 in \cite{Da16}; however, we sketch a different argument along the lines of the one already given. Let $(a_0,\ldots , a_{k}) \in D(I,j)^{k+1}$ be such that that in every odd $q$ we have $(a_{0,q},\ldots , a_{k,q}) \in H_{q}^{k+1} \setminus S_{q}^{k+1}$; we know that $i$ can be chosen such that some positive fraction exceeding $\ve > 0$ of the elements in $D(I,j)^k$ satisfy this. 

Fix a vector $(b_0,\ldots , b_k)$ such that not all of its elements are equal, and all of its elements are small. Then for the equality $a_0 - d_0 - b_0 = \cdots = a_k - d_k - b_k$ to hold, there must be a highest index $q$ such that $a_{0,q} - d_{0,q}, \ldots , a_{k,q} - d_{k,q}$ are not all equal. Then if $q$ is odd, the fact that $a_{\ell,q} \in H_{q}^{k+1} \setminus S_{q}^{k+1}$ for some index $\ell$ and $a_{\ell,q} - d_{\ell,q} \neq a_{\ell',q} - d_{\ell',q}$ for some $\ell'$ implies that $|a_{\ell,q} - d_{\ell,q} - a_{\ell',q} + d_{\ell',q} | \gg 2 \max D(I,q)$. As the $b_\ell$'s are small, this contradicts that $a_\ell - d_\ell - b_\ell = a_{\ell'} - d_{\ell'} - b_{\ell'}$. The case where $q$ is even is handled similarly, and Proposition \ref{P:arbproduct} concludes. 

Finally, note that $T$ is Koopman mixing by Lemma \ref{L:mixing}.

The case where $p = \infty$ is handled easily by choosing even height sets to have bounded cuts. Hence, $T$ is partially rigid and has infinite conservative index (as in Proposition 5.1 of \cite{LoSi17}). 
\end{proof}

\begin{remark} \normalfont
It is easy to see that not all of the transformations covered by Proposition \ref{P:chaconergcond} are isomorphic to their inverse. Consider the infinite-measure Chacon transformation with $3$-cuts that uses height sets $H_n = \{0, h_n, 2h_n +1 \}$ with $h_0 = 1$, $h_{n+1} = 6h_n +2$ (so $3h_n + 1$ spacers are added on the last subcolumn). Let $I$ be a level of column $C_i, i \ge 1$. For $n \ge i$, it is clear that $I \cap T^{h_n +1} I \cap T^{2h_n + 1} I $ is a union of  levels in the third subcolumn of $C_n$. However, the heights of the sublevels of $I$ in the third subcolumn of $C_n$ are at heights $2h_n +1+ D(I,n)$, and the heights of the sublevels of $T^{h_n + 1} I$ in the third subcolumn are at heights $2h_n + 2 + D(I,n)$. It is not difficult to show that $D(I,n)$ never contains two adjacent levels, so $|2h_n + 1 + D(I,n) \cap 2h_n + 2 + D(I,n)| = |D(I,n) \cap 1 + D(I,n)| = 0$. Thus, for the infinite Chacon: 
\begin{align*}
\mu \left( I \cap T^{-h_n} I \cap T^{-2h_n - 1} I \right) = \mu \left( I \cap T^{h_n + 1} I \cap T^{2h_n +1 } I \right) = 0.
\end{align*}
On the other hand, observe that for any positive finite-measure set $A \subset X$, we have 
\begin{align*}
\mu \left( A \cap T^{h_n} I \cap T^{2h_n + 1} A \right) \rightarrow \frac{1}{3} \mu(A).
\end{align*}
Thus, $T$ is not isomorphic to its inverse. We note that the fact that the infinite Chacon transformation is not isomorphic to its inverse was shown by de la Rue, Janvresse and Roy \cite{JRR} by different methods. More recently, Danilenko \cite{Da16b} has constructed a different class of infinite Chacon transformations and shown they are not isomorphic to their inverses.
\end{remark}

\begin{remark}  \normalfont
We have mentioned that Bergelson's question of whether infinite ergodic index implies infinite symmetric ergodic index remains open. It is was shown in \cite{CFKLPS} that when $T$ is rank-one, $T\times T^{-1}$ is always conservative, though this is not the case in general. One can ask 
whether infinite conservative index implies infinite symmetric conservative index, even in the rank-one class. 
\end{remark}



\section{Infinite Chac\'{o}n Type Transformations are Not Power Conservative}
\label{S:Chacon-npc}

The canonical $3$-cut Chac\'{o}n Type Transformation was first extended to infinite measure in \cite{AdFrSi97}. The authors showed that the transformation had infinite ergodic index. This result accords with the observation, made below, that all $T$ in a broader class defined by \eqref{E:chacondef} have $(T\times T^{-1})^{(k)}$ ergodic for all $k \in \N$. In \cite{GrHiWa03}, it was shown that the transformation defined in \cite{AdFrSi97} was not $7$-power conservative, hence not power weakly mixing. Infinite Chacon-type transformations produced using $(C,F)$ construction techniques were studied in \cite{Da04}, wherein by Theorem 0.3 a $2$-cut class with fast growth in $h_n$ was shown to be not power conservative. More recently, this transformation was the object of study in \cite{JRR}, where it shown to have trivial centralizer and no nontrivial factors. 

In Lemma \ref{L:dminusdap}, we give a condition on the height sets of $T$ which forces $T$ to be not power conservative, and explicitly bound the $\kappa$ such that $T \times \cdots \times T^{\kappa +1}$ is conservative. Corollary \ref{C:chaconnotpc} provides two simpler and sufficient conditions for the former result. All infinite Chac\'{o}n type transformations meet these conditions, and in fact a larger class of transformations which add large spacer stacks on their rightmost subcolumns do as well. 
 	
We define an \textbf{infinite Chac\'{o}n type transformation} as one which uses height sets $H_n,\, n \ge 0$ of the form  
\begin{align} \label{E:chacondef}
 H_n = \Big\{0,\mathbbm{1}_{\{1 \ge q\}}+h_n,\ldots,\mathbbm{1}_{\{t-1 \ge q\}}+(t-1) h_n\Big\},
 \end{align}
where $\mathbbm{1}$ is the indicator function and $q$ is some fixed element of $\{1,\ldots,t-1\}$. Thus, the selection of $q$ indicates that we add one spacer on the $q-1^\text{th}$ subcolumn of $C_n$, where the subcolumns are $0$-indexed. The parameter $t > 1$ is fixed, implying that we make $t-1$ cuts at each stage in order to get $t$ subcolumns. We stipulate that $h_{n+1} = m_{1} h_{n} + m_0$ for some absolutely constant real numbers $m_1,m_0$, where $m_1 \ge 2t$ and $m_0 \ge 1$; thus, we add $(m_1 - t) h_{n} + m_0 - 1$ spacers on the last subcolumn of $C_n$ in order to get $C_{n+1}$. Corollary \ref{C:fixeddelta} implies that any $T$ defined by \eqref{E:chacondef} forces $T\times T^{-1}$ to have infinite ergodic index. 

In the following lemmas, whenever we use the descendant set $D(I,n)$, we assume that $I$ is a level drawn from a column $C_i$, where $i \le n$. 
Also, recall that $H_n$ denotes the $n$th height set and $h_n$ is the height of the highest level in $C_n$. 
\begin{lemma} \label{L:dminusdap}
Let $T$ be a rank-one transformation with the property that 
\begin{align}\label{E:hineq}
\liminf_{n \rightarrow \infty} \frac{h_n - 2 \max \sum_{k=0}^{n-1} \max H_k}{\sum_{k=0}^n \max H_k} = \liminf_{n \rightarrow \infty} \frac{h_n - 2 \max D(I_0,n)}{\max D(I_0,n)} > \kappa^{-1}
\end{align}
for some positive $\kappa$ (equivalently, that the condition holds when $I_0$ is replaced with some fixed level $I_i$). Then $T$ is not power conservative, hence not power weakly mixing. Specifically, $T \times \cdots \times T^{\kappa + 1}$ is not conservative. 
\end{lemma}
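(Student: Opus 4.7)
I plan to apply Proposition \ref{P:conservprod2} to the product $S := T \times T^2 \times \cdots \times T^{\kappa+1}$, taking $k = \kappa+1$ and $\alpha_\ell = \ell+1$. The contrapositive of that proposition asserts that $S$ fails to be conservative provided one can exhibit $\varepsilon > 0$ and a level $I$ at the base of some column $C_i$ such that, for every $j > i$, at least $\varepsilon |D(I,j)|^{\kappa+1}$ of the tuples $(a_0,\ldots,a_\kappa) \in D(I,j)^{\kappa+1}$ admit no complementary tuple $(d_0,\ldots,d_\kappa) \in D(I,j)^{\kappa+1}$ with $(a_\ell - d_\ell)/(\ell+1) = m$ a common nonzero integer.

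First I would extract the baseline bound on $m$: from $a_\kappa - d_\kappa = (\kappa+1) m$ and $d_\kappa \in [0, \max D(I,j)]$, any nonzero $m$ must satisfy $(\kappa+1)|m| \le \max D(I,j)$. Next, I would decompose each tuple via the last descendant step, writing $a_\ell = a_\ell^\ast + x_\ell$ and $d_\ell = d_\ell^\ast + y_\ell$ with $a_\ell^\ast, d_\ell^\ast \in D(I, j-1)$ and $x_\ell, y_\ell \in H_{j-1}$, so that the complement equations become $(\ell+1) m = (a_\ell^\ast - d_\ell^\ast) + (x_\ell - y_\ell)$ for each $\ell$. Since the hypothesis $h_j > (2 + \kappa^{-1}) \max D(I_0, j)$ propagates to the gap estimate $h_{j-1} > 2 \max D(I, j-1)$ for sufficiently large $j$, and since consecutive elements of $H_{j-1}$ differ by at least $h_{j-1}$, each difference $x_\ell - y_\ell$ is uniquely pinned down by $(\ell+1) m$.

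Third, I would construct the obstructing family $\mathcal{F}_j$ consisting of the tuples whose last-stage configuration is $(x_0, \ldots, x_\kappa) = (0, \ldots, 0, \max H_{j-1})$; under the bounded-cut Chac\'{o}n-type assumption $r_n = t$ of this section, $\mathcal{F}_j$ has uniform density at least $t^{-(\kappa+1)}$. For tuples in $\mathcal{F}_j$ with $m > 0$, the $\ell = 0$ equation forces $y_0 = 0$, and the $\ell = \kappa$ equation forces $y_\kappa = \max H_{j-1}$; the reduced tuples $(a_0^\ast, \ldots, a_\kappa^\ast)$ and $(d_0^\ast, \ldots, d_\kappa^\ast)$ in $D(I, j-1)^{\kappa+1}$ then form a valid complement pair at stage $j-1$ with the tightened bound $(\kappa+1)|m| \le \max D(I, j-1)$ (obtained from $a_\kappa - d_\kappa = a_\kappa^\ast - d_\kappa^\ast$). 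The $m < 0$ case is symmetric, upon swapping the roles of the top and bottom subcolumns in the definition of $\mathcal{F}_j$.

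The main obstacle will be closing the argument while maintaining a uniform positive density. Naive iteration of the top-heavy reduction over $L$ stages shrinks $|\mathcal{F}_j|$ by a factor $t^{-(\kappa+1)L}$, whereas the accompanying tightening $(\kappa+1)|m| \le \max D(I, j-L)$ only forces $|m| < 1$ after $L$ grows with $\max D(I, j)$. The quantitative hypothesis encoding the specific threshold $\kappa^{-1}$ is what is designed to balance these two effects: it should be exploited to extract a one-shot arithmetic contradiction at a single reduction stage (rather than iterating), using that the ratio $\max H_{n-1}/\max D(I_0, n-1)$ forced by the hypothesis is large enough that the $\kappa+1$ equations cannot be simultaneously satisfied for nonzero $m$ on the whole family $\mathcal{F}_j$. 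Once such a positive-fraction obstructing family is produced, Proposition \ref{P:conservprod2} immediately yields non-conservativity of $S$, and hence the failure of power conservativity (and so of power weak mixing) for $T$.
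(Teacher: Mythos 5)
Your framing via the contrapositive of Proposition \ref{P:conservprod2} is correct, but the proposal is not a proof: your final paragraph concedes that the reduction does not close and only expresses the hope that the hypothesis ``should be exploited to extract a one-shot arithmetic contradiction at a single reduction stage.'' That contradiction is the entire content of the lemma, and it is missing. The paper's mechanism is a clean induction on $n$ showing that $D(I_N,n)-D(I_N,n)$ contains \emph{no} progression $m,2m,\ldots,(\kappa+1)m$ with $m\neq 0$. The induction uses exactly the quantity controlled by \eqref{E:hineq}: every element of $D(I_N,n+1)-D(I_N,n+1)$ not already in $D(I_N,n)-D(I_N,n)$ involves two distinct elements of $H_n$ and hence has absolute value at least $h_n-\max D(I_N,n)$. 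So either the whole progression lies in $D(I_N,n)-D(I_N,n)$ (impossible by the inductive hypothesis); or the progression crosses from $D(I_N,n)-D(I_N,n)$ into the new elements, forcing common difference at least $h_n-2\max D(I_N,n)$, whence $(\kappa+1)m>\max D(I_N,n+1)$ by \eqref{E:dminusdineq}; or already $m$ is a new element, with the same size estimate. Since the complement equations $a_\ell-d_\ell=(\ell+1)m$ would place such a progression inside $D(I_N,j)-D(I_N,j)$, \emph{no} tuple admits a complement, which is far stronger than a positive-density obstructing family and is what Proposition \ref{P:conservprod2} needs.

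Two further defects in your route as written. First, your family $\mathcal{F}_j$ has density $r_{j-1}^{-(\kappa+1)}$, and you secure uniformity by invoking a bounded-cut assumption $r_n=t$ that is not among the lemma's hypotheses (the lemma is stated for an arbitrary rank-one $T$ satisfying \eqref{E:hineq}); for unbounded $r_n$ that density tends to $0$ and the argument collapses. Second, the claim that $x_\ell-y_\ell$ is ``uniquely pinned down'' by $(\ell+1)m$ is false in general: two distinct differences of elements of $H_{j-1}$ can differ by as little as $1$ (e.g.\ $h_{j-1}$ and $h_{j-1}+1$ in a Chac\'on-type height set), so the equation only localizes $x_\ell-y_\ell$ to a window of width about $2\max D(I,j-1)$, not to a single value.
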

\begin{proof}
Assume that there exist integers $N$ and $\kappa$ such that 
\begin{align}
\sup_{n \ge N} \frac{\max D(I_N,n+1)}{h_n - 2\max D(I_N,n)} = \sup_{n \ge N} \frac{\sum_{k=N}^n \max H_k}{h_n - 2 \max \sum_{k=N}^{n-1} \max H_k}&  \nonumber\\
\le \sup_{n \ge N} \frac{\sum_{k=0}^n \max H_k}{h_n - 2 \max \sum_{k=0}^{n-1} \max H_k} &< \kappa, \label{E:dminusdineq}
\end{align}
where $I_N$ is the base level of $C_N$. 

We claim by induction that $D(I_N,n) - D(I_N,n)$ contains no arithmetic progressions (APs) of the form $a-d,2(a-d),\ldots , (\kappa+1)(a-d)$ for some $a-d > 0$. The base case of $n = N$ is clear, as $D(I_N,N)$ is a singleton. For the inductive step, we can show that the existence of $a-d,\ldots , (\kappa+1)(a-d)$ is a contradiction by considering three cases. Note first that the smallest nonnegative element of $D(I_N,n+1) - D(I_N,n+1)$ that is not in $D(I_N,n) - D(I_N,n)$ is at least $h_n - \max D(I_N,n) > \max D(I_N,n)$ (by assumption). 
\begin{enumerate}
\item[\textbf{Case 1}:] If $(\kappa+1)(a-d) \in D(I_N,n) - D(I_N,n)$, then the entire AP occurs in $D(I_N,n) - D(I_N,n)$, which is impossible by hypothesis.
\item[\textbf{Case 2}:] If $ (a-d) \in D(I_N,n) - D(I_N,n)$ but $(\kappa + 1) (a-d) \not\in D(I_N,n) - D(I_N,n)$, then the AP bridges the difference between the largest element of $D(I_N,n) - D(I_N,n)$ and the smallest other (positive) element of $D(I_N,n+1)-D(I_N,n+1)$. Hence, the AP has common difference at least $h_n - \max D(I_N,n) - D(I_N,n)$, and by \eqref{E:dminusdineq} it cannot occur entirely within $D(I_N,n+1) - D(I_N,n+1)$. 
\item[\textbf{Case 3}:] If $(a-d) \not\in D(I_N,n) - D(I_N,n)$ then by \eqref{E:dminusdineq}:
\[ (\kappa+1) (a-d) \ge (\kappa+1)( h_n - \max D(I_N,n)) > \max D(I_N,n+1),\]
which is impossible.
\end{enumerate}
 Hence, for $n \ge N$, $D(I_N,n)$ excludes any APs of the form $(a-d),\ldots , (\kappa +1) (a-d)$ for $(a-d)$ positive, and by symmetry it has no such APs for $(a-d)$ negative. Then by Proposition \ref{P:conservprod2}, $T \times T^2 \times \cdots \times T^{\kappa + 1}$ cannot be conservative. 
\end{proof}

\begin{corollary}\label{C:simplernotpc}
If $\exists N$ such that 
\begin{enumerate}
\item For all $n \ge N$, $h_{n+1} \ge 2 h_n + 2 \max H_n + K$ for some fixed $K$ (i.e.\ the mass of the column is added as spacers on its last subcolumn)
\item $\frac{h_n}{\max H_{n+1}} \ge b > 0$ for some fixed $b$
\end{enumerate}
then $T$ is not power conservative.
\end{corollary}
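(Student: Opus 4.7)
The strategy is to verify the hypothesis of Lemma \ref{L:dminusdap}: I will show that conditions (1) and (2) together force
\[
\liminf_{n\to\infty}\frac{h_n - 2M_n}{M_n} \ge b > 0,
\]
where $M_n := \max D(I_0,n) = \sum_{k=0}^{n-1}\max H_k$. Any $\kappa > 1/b$ then satisfies the requirement of Lemma \ref{L:dminusdap}, yielding the conclusion that $T$ is not power conservative.

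The first step is to lower-bound $E_n := h_n - 2M_n$. Since $M_{n+1} = M_n + \max H_n$, condition (1) produces the one-step recursion
\[
E_{n+1} \;\ge\; 2h_n + 2\max H_n + K - 2M_n - 2\max H_n \;=\; E_n + h_n + K.
\]
Iterating from stage $N$ and using that $h_n \to \infty$ eventually dominates the fixed initial quantity $E_N$, one obtains for any $\ve > 0$ that $E_n \ge (1-\ve)h_{n-1}$ for all sufficiently large $n$. The second step is to upper-bound $M_n$. Since condition (1) forces $h_{k+1} \ge 2h_k$, telescoping gives the geometric estimate $\sum_{k=N-1}^{n-2}h_k \le 2h_{n-2}$; combined with condition (2), which yields $\max H_k \le h_{k-1}/b$ for $k \ge N$, this produces
\[
M_n \;\le\; M_N + \frac{1}{b}\sum_{k=N-1}^{n-2}h_k \;\le\; M_N + \frac{2h_{n-2}}{b}.
\]

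Putting the two bounds together gives
\[
\frac{E_n}{M_n} \;\ge\; \frac{(1-\ve)\,h_{n-1}}{M_N + 2h_{n-2}/b},
\]
and since $h_{n-1}/h_{n-2} \ge 2$ and $M_N/h_{n-2} \to 0$, the right-hand side tends to at least $(1-\ve)b$. Letting $\ve \to 0$ yields $\liminf_n E_n/M_n \ge b$, so Lemma \ref{L:dminusdap} applies with $\kappa = 2/b$ and finishes the proof. I expect no real obstacle: conditions (1) and (2) are tailored so that (1) produces an additive gap of order $h_{n-1}$ in $E_n$ while (2) keeps $M_n$ at the scale $h_{n-2}/b$, and these asymptotics combine cleanly. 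The only mild bookkeeping is to track the index shifts and absorb the fixed constants $M_N$, $E_N$, and $K$ into the asymptotic estimates.
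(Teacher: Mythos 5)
Your proof is correct and follows essentially the same route as the paper's: both verify the hypothesis \eqref{E:hineq} of Lemma \ref{L:dminusdap} by using condition (1) to derive the recursion $E_{n+1} \ge E_n + h_n + K$ for the numerator and condition (2) to control $\max H_k$ by $h_{k-1}/b$ in the denominator, with the geometric growth of $h_n$ absorbing the fixed constants. Your version is simply a more explicit bookkeeping of the iteration that the paper's proof compresses into ``applying the inequality iteratively.''
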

\begin{proof}
Applying the first inequality to the left side of \eqref{E:hineq} implies
\begin{align*}
\frac{h_{n+1} - 2 \max \sum_{k=0}^{n} \max H_k}{\sum_{k=0}^{n+1} H_k} &\ge \frac{h_n + h_n + K  - 2\max \sum_{k=0}^{n-1} \max H_k}{\max H_{n+1} + \sum_{k=0}^{n} \max H_k} \\
& \ge \frac{b \max H_{n+1}+h_n + K - 2\max \sum_{k=0}^{n-1} \max H_k}{\max H_{n+1} + \sum_{k=0}^{n} \max H_k}.
\end{align*}
Applying the inequality iteratively to $n+2,\ldots$ shows that the left side of \eqref{E:hineq} is lower bounded by, say, $\frac{b}{2} > 0$ for sufficiently large $n$. Addition of a linear term in $n$ in the numerator is inconsequential because $\max H_{n}$ grows exponentially.
\end{proof}

Noting that Chac\'{o}n type transformations fulfill these conditions allows us to say the following:

\begin{corollary}\label{C:chaconnotpc}
All infinite Chac\'{o}n type transformations are not power conservative. 
\end{corollary}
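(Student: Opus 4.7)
The plan is to verify directly that every infinite Chac\'{o}n type transformation satisfies the two hypotheses of Corollary~\ref{C:simplernotpc}, after which the conclusion is immediate.

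From the definition \eqref{E:chacondef}, the largest element of $H_n$ is $(t-1)h_n + 1$, since $q \le t-1$ makes $\mathbbm{1}_{\{t-1 \ge q\}} = 1$. This gives the explicit formula $\max H_n = (t-1)h_n + 1$ that I will use throughout.

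For hypothesis (1) of Corollary~\ref{C:simplernotpc}, I plan to substitute $h_{n+1} = m_1 h_n + m_0$ and use the hypothesis $m_1 \ge 2t$ to rewrite
\[
h_{n+1} \;=\; m_1 h_n + m_0 \;\ge\; 2t\, h_n + m_0 \;=\; 2h_n + 2\bigl((t-1)h_n + 1\bigr) + (m_0 - 2) \;=\; 2h_n + 2\max H_n + (m_0 - 2),
\]
so the constant $K := m_0 - 2$ works. For hypothesis (2), I plan to compute
\[
\frac{h_n}{\max H_{n+1}} \;=\; \frac{h_n}{(t-1)(m_1 h_n + m_0) + 1},
\]
which is bounded below by a positive constant $b$ (for instance, any $b < 1/((t-1)m_1)$ will do once $n$ is sufficiently large, since the expression converges to $1/((t-1)m_1) > 0$).

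Both hypotheses of Corollary~\ref{C:simplernotpc} are therefore satisfied, so $T$ is not power conservative and, in particular, not power weakly mixing. There is no real obstacle here beyond the routine arithmetic; the substantive content was packaged into the earlier lemma and corollary, leaving only the identification of $\max H_n$ and the manipulation of the recursion $h_{n+1} = m_1 h_n + m_0$ with the inequality $m_1 \ge 2t$.
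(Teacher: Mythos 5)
Your proof is correct and follows essentially the same route as the paper: both verify the two hypotheses of Corollary~\ref{C:simplernotpc}, the first via the recursion $h_{n+1} = m_1 h_n + m_0$ with $m_1 \ge 2t$ (the paper simply asserts this with $K=-1$, consistent with your $K = m_0 - 2 \ge -1$), and the second via the limit $h_n/\max H_{n+1} \to 1/((t-1)m_1) > 0$. The only difference is that you spell out the arithmetic for hypothesis (1) that the paper calls obvious.
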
 
\begin{proof}
The first condition of Corollary \ref{C:simplernotpc} with $K = -1$ is obviously fulfilled by infinite Chac\'{o}n transformations, so all that is left is to show the second. But by construction,
\begin{align*}
\frac{h_n}{\max H_{n+1}} = \frac{h_n}{1 + (t-1) (m_1 h_n + m_0)} \rightarrow \frac{1}{(t-1) m_1} >0. 
\end{align*}
\end{proof}


\section{All $(t,q)$-type Chac\'{o}n Maps are Power Weakly Mixing}
\label{S:Chacon-pwm}

A rank-one $T$ is said to be $\mathbf{(t,q)}$-\textbf{Chac\'{o}n Type Transformation} if $t \ge 3$, $q \ge 1$, and at the $n^\text{th}$ stage we cut $C_n$ into $t$ subcolumns and distribute $q$ spacer blocks of height $h_n$ among subcolumns $C_{n,0}$ through $C_{n,t-2}$. We then add one spacer to the last, rightmost column $C_{n,t-1}$. For the purposes of the following section, we set $k = q + t$ to denote the number of segments of height $h_n$ contained in $C_{n+1}$. Note that $h_n = k h_{n-1} + 1$ and hence $h_n = \sum_{i=0}^n k^i$. The height sets $H_n$ of $T$ thus take the form 
\begin{align*}
H_n& = \left\{ 0 , \phi(1) h_n,\phi(2) h_n ,\ldots, \phi(t-1) h_n \right\},
\end{align*}
where $1 \le \phi(i) - \phi(i-1) \le 2$, and $\phi(t-1) = k-1$. For example, the main construction of \cite{DGMS99}, studied further in \cite{GrHiWa03}, is $(t,q)$ with $t = 4$ and $q = 1$, where one $h_n$-spacer block is added to the second subcolumn of $C_n$.

In contrast to the result of Proposition \ref{C:chaconnotpc}, we give a necessary and sufficient condition for the $(t,q)$-type Chacon transformation to be power weakly mixing. Throughout, we will let 
\[[n] = \{0,\ldots ,n\}.\]

\begin{lemma} \label{L:incompletediff} For any set $A=\big\{\phi(0),\phi(1),\ldots,\phi(t-1)\big\},\, t > 1$ where 
\begin{enumerate}
\item $\phi(0)=0$ and $\phi(t-1)=k-1$ for some integer $k$, and
\item $1\le \phi(i+1)-\phi(i)\le 2$ for $i \in \{0,\ldots,t-2\}$,
\end{enumerate}
if $\phi(i)-\phi(j)=1$ for any $i,j$ in $\{0,\ldots,t-1\}$ then 
\[\left[\left\lceil\frac{ k }{2}\right\rceil \right] \cup \Big\{z:\, 0\le z \le k-1 \text{ and } z \equiv k-1 \mod{2}\Big\} \subset A-A.
\]
\end{lemma}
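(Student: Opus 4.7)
The plan is to split the conclusion into the interval piece $[\cel{k/2}]$ and the parity-matched piece $\{z : 0 \le z \le k-1,\ z \equiv k-1 \mod{2}\}$, and tackle each with a separate argument. The hypothesis will be essential for the interval piece, whereas the parity piece follows from a structural induction independent of it.

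First I would extract the consequences of the gap condition. Since every gap $g_\ell := \phi(\ell+1) - \phi(\ell)$ lies in $\{1,2\}$, the hypothesis $\phi(i) - \phi(j) = 1$ forces $i = j+1$, so some index $m$ satisfies $\phi(m+1) = \phi(m) + 1$; set $a = \phi(m)$, giving $\{a, a+1\} \subset A$. I would also record the \emph{no-double-hole} property implied by $g_\ell \le 2$: no two consecutive elements of $\{0,\ldots,k-1\}$ are both absent from $A$, since that would produce a gap of size at least $3$.

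For the interval piece, the plan is to show $A - A \supset (A - a) \cup (A - a - 1) \supset [-a-1,\, k-1-a]$. Given any integer $z$ in that interval, $z+a \in \{-1,\ldots,k-1\}$ and $z+a+1 \in \{0,\ldots,k\}$; the boundary cases $z+a \in \{-1, k-1\}$ are handled directly by $0, k-1 \in A$, and in the interior the no-double-hole property forces at least one of $z+a,\, z+a+1$ into $A$. Using $A - A = -(A-A)$, I would similarly obtain $A - A \supset [-(k-1-a),\, a+1]$. Since $(a+1) + (k-1-a) = k$, the larger of $a+1$ and $k-1-a$ is at least $\cel{k/2}$, so $[0, \cel{k/2}] \subset A - A$.

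For the parity piece, I would induct on $\ell \ge 0$ that $z_\ell := k-1-2\ell$ lies in $A - A$ whenever $z_\ell \ge 0$. The base case $\ell = 0$ is $k-1 = \phi(t-1) - \phi(0)$. For the inductive step, suppose $\phi(i) - \phi(j) = z_\ell + 2$ with $i > j$; a case analysis on $(g_{i-1}, g_j) \in \{1,2\}^2$ exhibits an endpoint shift that decreases the difference by exactly $2$: replace $i$ with $i-1$ if $g_{i-1}=2$, replace $j$ with $j+1$ if $g_j=2$, and otherwise $g_{i-1}=g_j=1$ and the pair $(i-1, j+1)$ works. The double shift requires $i \ge j+2$, which is automatic because $i = j+1$ with $g_j = 1$ would force $z_\ell = -1$, outside the range.

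The main subtlety is noticing that the parity-matched piece actually holds without the hypothesis; the inductive shift uses only the structure $g_\ell \in \{1,2\}$. The hypothesis enters only in the interval piece, where the pair $\{a, a+1\} \subset A$ is precisely what makes the no-double-hole covering argument work.
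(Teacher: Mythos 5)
Your proposal is correct and follows essentially the same route as the paper: the same split into the interval piece and the parity piece, the same use of the consecutive pair $\{a,a+1\}\subset A$ together with the no-double-hole consequence of the gap condition to get $[0,\phi(i)]\cup[0,k-\phi(i)]\subset A-A$, and the same descent by $2$ from $k-1$ for the parity-matched differences (you phrase the steps as direct constructions where the paper argues by contradiction, but the content is identical). Your observation that the parity piece does not need the hypothesis $\phi(i)-\phi(j)=1$ is also consistent with the paper's argument, which likewise only invokes that hypothesis for the interval piece.
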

\begin{proof}
Let $A$ be as specified; then there exist $i,j\in \{0,\ldots,t-1\}$ which satisfy $\phi(i)-\phi(j)=1$. Let $x\in \{1,\ldots,\phi(j)-1\}$ and suppose that $x\not\in A-A$. Then $\phi(i)-x$ and $\phi(j)-x$ cannot be elements of $A$. This presents a contradiction to the second condition on $A$, because then $\phi(\ell) - \phi(\ell-1) > 2$ for some element $\ell \in \{1,\ldots,j\}$ for which $\phi(\ell-1) < \phi(j) - x < \phi(i) - x < \phi(\ell)$. So $\{1,\ldots,\phi(j) - 1\}\subset A-A$. Also, trivially, we have $0\in A-A$ and $\phi(j),\phi(i)\in A-A$ because $0\in A$. Hence, $\big\{0,\ldots,\phi(i)\big\}\subset A-A$. A similar argument shows that $\big\{0,\ldots,k-1-\phi(j)\big\}\subset A-A$. Note that $\phi(j)=\phi(i)-1$ so $k-1-\phi(j)=k-\phi(i)$. Since
\[\max\big\{\phi(i),\, k-\phi(i)\big\}\ge \cel{\frac{k}{2}},\]
we have $\left[\left \lceil\frac{ k }{2}\right\rceil \right]\subset A-A$. 

Now we want to show that $A-A$ contains all of the elements of $\{0,\ldots,k-1\}$ with the same parity as $k-1$. Clearly $k-1\in A-A$. Suppose that $k-3\not\in A-A$. Then $2,\, k-3\not\in A$. But this implies that $1,k-2\in A$, which is a contradiction because $k-2-1=k-3$. So at least one of $1,2,k-3,k-2$ is in $A$, which gives us $k-3\in A$. To continue the argument, let $k$ and $\ell$ be the elements of $\{0,\ldots,t-1\}$ which give $\phi(k)-\phi(\ell)=k-3$. Suppose that $k-5\not \in A-A$; then $\phi(\ell)+2$, $\phi(k)-2$ are both not in $A$. But this implies that $\phi(\ell)+1$ and $\phi(k)-1$ are both in $A$, and we have $\phi(k)-1-(\phi(\ell)+1) = k-5$. A similar argument works for $k-7$, $k-9$, etc., which concludes. 
\end{proof}

\begin{corollary} \label{C:lowerhalf} We have 
\begin{equation}
\left[\left \lceil \frac{k}{2}  \right\rceil k^{n-1}\right] \subset (A-A)+k(A-A)+k^2(A-A)+...+k^{n-1}(A-A). \label{E:lowerhalf}
\end{equation}
\end{corollary}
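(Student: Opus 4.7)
The plan is to prove by induction on $n$ the slightly stronger \emph{symmetric} statement
\[
\left\{-\left\lceil \tfrac{k}{2}\right\rceil k^{n-1},\ \ldots,\ \left\lceil \tfrac{k}{2}\right\rceil k^{n-1}\right\} \subset \sum_{i=0}^{n-1} k^i (A-A),
\]
from which the corollary follows by restricting to nonnegative integers. The key structural input is that $A-A$ is symmetric about $0$, so combined with Lemma \ref{L:incompletediff} it contains every integer in $[-\lceil k/2\rceil,\lceil k/2\rceil]$. This symmetric interval has $2\lceil k/2\rceil + 1 \ge k+1$ elements, just enough room to run a balanced base-$k$ expansion. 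The base case $n = 1$ is exactly this observation.

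For the inductive step, suppose the claim holds at level $n-1$, and let $m \in \Z$ satisfy $|m| \le \lceil k/2\rceil k^{n-1}$. The idea is to peel off the top digit: choose $c$ to be an integer closest to $m/k^{n-1}$ (breaking ties arbitrarily). Since $|m|/k^{n-1} \le \lceil k/2\rceil$, one has $|c| \le \lceil k/2\rceil$, so $c \in A-A$ by the base-case observation. At the same time, $|m - c k^{n-1}| \le k^{n-1}/2 \le \lceil k/2\rceil k^{n-2}$, using $\lceil k/2\rceil \ge k/2$, so the induction hypothesis places the residue $m - c k^{n-1}$ in $\sum_{i=0}^{n-2} k^i(A-A)$. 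Adding the $c k^{n-1}$ contribution back in completes the step.

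The one real obstacle is recognizing at the outset that a naive proof via the ordinary (nonnegative) base-$k$ expansion is bound to fail: Lemma \ref{L:incompletediff} only guarantees $A-A \supset \{0,1,\ldots,\lceil k/2\rceil\}$ together with the elements of $\{0,\ldots,k-1\}$ of the same parity as $k-1$, so digits like $\lceil k/2\rceil + 1$ may genuinely be missing from $A-A$. The fix --- using \emph{balanced} digits drawn from $[-\lceil k/2\rceil,\lceil k/2\rceil]$ and rounding $m/k^{n-1}$ to the nearest integer rather than truncating --- is made available precisely by the symmetry $A-A = -(A-A)$, and once one commits to balanced digits the induction is routine.
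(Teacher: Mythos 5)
Your proof is correct and is essentially the paper's own argument: both inductions hinge on the symmetry of the difference sets about zero, the inclusion $\left[\left\lceil k/2\right\rceil\right]\subset A-A$ from Lemma \ref{L:incompletediff}, and the inequality $k^{n-1}/2\le\left\lceil k/2\right\rceil k^{n-2}$ that lets the top balanced digit's residue be absorbed by the inductive hypothesis. The paper phrases the step as covering $\left[\left\lceil k/2\right\rceil k^{n}\right]$ by intervals of radius $\left\lceil k/2\right\rceil k^{n-1}$ centered at the multiples $k^{n}x$, $x\in A-A$, which is the same decomposition as your nearest-integer digit peeling.
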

\begin{proof}
Proceed by induction. The base case with $n=1$ is implied by Lemma \ref{L:incompletediff}. Suppose that \eqref{E:lowerhalf} holds for some $n\in \N$. Note that the set on the right hand set of (\ref{E:lowerhalf}) is symmetric about zero, so for any element $x\in (A-A)$ and any element $y\in \left[k^{n+1}\right]$ with $|y-k^n x| \le \left \lceil \frac{k}{2}  \right\rceil k^{n-1}$, we will have $y\in \big((A-A)+k(A-A)+k^2(A-A)+...+k^{n-1}(A-A)\big)+k^n(A-A)$. By Lemma \ref{L:incompletediff}, the set $\{k^n x:\, x\in A-A\}$ contains $\big\{0,k^n,...,\left \lceil \frac{k}{2}  \right\rceil k^n \big\}$, and we have shown that any integer that is contained in an closed interval of diameter $\left \lceil \frac{k}{2}  \right\rceil k^{n-1}$ centered around any element of this set is contained in the right side of (\ref{E:lowerhalf}).  Now $\frac{k^n}{2}\le \left \lceil \frac{k}{2}  \right\rceil k^{n-1}$, so we deduce that
\[\left[\left \lceil \frac{k}{2}  \right\rceil k^{n}\right] \subset (A-A)+k(A-A)+k^2(A-A)+...+k^{n-1}(A-A)+k^n(A-A),\]
which shows that (\ref{E:lowerhalf}) holds for the $n+1$ case. 
\end{proof}

\begin{corollary} \label{C:evenodd} All of the elements of $\left[k^n -1 \right]$ with the same parity as $k-1$ are contained in $(A-A)+k(A-A)+...+k^{n-1}(A-A)$, for every $n\in \N$.
\end{corollary}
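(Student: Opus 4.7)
The plan is a direct induction on $n$. The base case $n=1$ is exactly the second conclusion of Lemma~\ref{L:incompletediff}, which places every element of $\{0,\ldots,k-1\}$ of parity $k-1$ into $A-A$. For the inductive step, assuming the inclusion at stage $n$, given $x \in [k^{n+1}-1]$ with $x \equiv k-1 \pmod 2$, my aim is to peel off a leading coefficient: to find $c \in A-A$ and an integer $y$ with $x = c\, k^n + y$, $|y| \le k^n - 1$, and $y \equiv k-1 \pmod 2$. Once such $c$ and $y$ are produced, the inductive hypothesis applied to $|y|$, together with the symmetry of $A-A$ (and hence of the entire sum set), provides a representation of $y$ inside $(A-A)+k(A-A)+\cdots+k^{n-1}(A-A)$, and adjoining the coefficient $c$ at level $n$ gives the desired representation of $x$.

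For the choice of $c$ I would restrict attention to the two candidates $c = a$ and $c = a+1$, where $a = \lfloor x/k^n\rfloor \in \{0,\ldots,k-1\}$. When $k$ is even, $k^n$ is even, so any such shift automatically preserves the parity of $x$, and the only issue is membership in $A-A$. By Lemma~\ref{L:incompletediff}, the only way $a$ can fail to lie in $A-A$ is that $a$ is even and $a > \lceil k/2\rceil$, and in that situation $a+1$ is odd and hence lies in $A-A$ by the parity clause. When $k$ is odd, $k^n$ is odd, so preserving parity forces $c$ itself to be even; I would then take $c = a$ if $a$ is even and $c = a+1$ if $a$ is odd, and in either case $c$ is an even element of $\{0,\ldots,k-1\}$ and hence lies in $A-A$ again by the parity clause.

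Finally, for the range bound: $c = a$ always yields $y \in [0, k^n-1]$ directly, while $c = a+1$ yields $y \in [-k^n, -1]$, and the only value that could violate $|y| \le k^n - 1$ is the borderline $y = -k^n$, corresponding to $x = a\, k^n$. A one-line parity check rules this out: in every scenario where I select $c = a+1$, the integer $a$ has parity opposite to that of $k-1$, which would force $a\, k^n$ to have parity opposite to $x$ and contradict $x = a\,k^n$. The main obstacle here is not conceptual but organizational: one has to track simultaneously the parity of $k$, the parity of $c$ required by parity preservation, and the exclusion of the single borderline $y = -k^n$ under the parity hypothesis on $x$; once these are pinned down, the induction closes mechanically.
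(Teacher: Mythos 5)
Your proof is correct and follows essentially the same route as the paper's: induction on $n$, with the inductive step peeling off a leading term $c\,k^n$ with $c \in A-A$ of the appropriate parity and using the symmetry of the sum set to absorb the remainder via the inductive hypothesis. The paper phrases this as adding and subtracting the correct-parity elements of $[k^n-1]$ to $k^n(A-A)$ rather than decomposing a given $x$, and your version is more explicit about the borderline cases, but the underlying argument is identical.
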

\begin{proof} Again, proceed by induction. The base case is clear by Lemma \ref{L:incompletediff}. Suppose that the desired result holds true for some $n\in \N$. If $k$ is even, then $k-1$ is odd and by supposition we have $\{1,3,...,k^n-1\} \subset (A-A)+k(A-A)+...+k^{n-1}(A-A)$. By Lemma \ref{L:incompletediff} $A-A$ must contain the odds $\{1,...,k-1\}$; adding and subtracting elements from $\{1,3,...,k^n-1\}$ to the set $k^n(A-A)$, all of the odds from $1$ to $k^{n+1}-1$ must be contained in $(A-A)+k(A-A)+...+k^{n}(A-A)$, which completes the inductive step. The proof is similar when $k$ is odd, and is deduced by adding and subtracting the even numbers in $\{0,2,...,k^{n-1}-1\}$ to the even numbers in $k^n\cdot \{0,2,...,k-1\}\subset k^n (A-A)$. 
\end{proof}

\begin{lemma} \label{L:incompletediff2} Let $A$ be as specified in Lemma \ref{L:incompletediff} with the additional stipulation that $k \ge 3$, and let $B\subset \Z^+$ be a finite set with $B = \{\beta_1,...,\beta_v\}$. Then there exists some positive integers $n,m$ such that, for some element $\gamma$ with $k^m-\gamma \in (A-A)+k(A-A)+...+k^{m-1}(A-A)$, we have $k^n-\gamma \beta_q \in (A-A)+k(A-A)+...+k^{n-1}(A-A)$ for every $q \in \{1,...,v\}$. 
\end{lemma}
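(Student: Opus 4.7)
The plan is to build $\gamma$, $n$, $m$ explicitly using the structural description of $G_s := (A-A)+k(A-A)+\cdots+k^{s-1}(A-A)$ coming from Corollaries \ref{C:lowerhalf} and \ref{C:evenodd}. Corollary \ref{C:lowerhalf} places the entire symmetric interval $[-\lceil k/2\rceil k^{s-1},\lceil k/2\rceil k^{s-1}]$ inside $G_s$, which will be my main workspace. I note first that the condition $k^m-\gamma\in G_m$ is essentially a free parameter: the annuli $[\lfloor k/2\rfloor k^{m-1},(k+\lceil k/2\rceil)k^{m-1}]$ for $m\ge 1$ have multiplicative ratio at least $k/\lfloor k/2\rfloor\ge 3$, so they cover all sufficiently large positive integers, and any $\gamma$ in such an annulus has $k^m-\gamma\in[-\lceil k/2\rceil k^{m-1},\lceil k/2\rceil k^{m-1}]\subset G_m$; for very small $\gamma$ one can check directly that $k^m-\gamma\in G_m$ (for instance, $k^m-1=(k-1)(1+k+\cdots+k^{m-1})\in G_m$ since $k-1\in A-A$). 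So $m$ can be chosen after $\gamma$, and the real substance of the lemma is the condition on $n$.

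For condition (2), I plan to proceed by induction on $v=|B|$. In the base case $v=1$, take $n$ so large that $\beta_1\le \lceil k/2\rceil k^{n-1}$ and set $\gamma=\lfloor k^n/\beta_1\rfloor$; then $\gamma\beta_1\in[k^n-\beta_1,k^n]$, so $k^n-\gamma\beta_1\in[0,\lceil k/2\rceil k^{n-1}]\subset G_n$. For the inductive step, assuming $(\gamma',n')$ handles $B\setminus\{\beta_v\}$, I form $\gamma=\gamma'+\delta k^L$ for $L$ chosen much larger than $n'$, with $\delta$ selected so that $\gamma\beta_v$ lands in the central window $[k^N-\lceil k/2\rceil k^{N-1},k^N+\lceil k/2\rceil k^{N-1}]$ for a new $N>L+\log_k\beta_v$. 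The correction $\delta k^L\beta_q$, for the previously handled indices $q<v$, is supported only at positions $\ge L$, so its effect on $k^N-\gamma\beta_q$ can be absorbed by taking $L$ large enough that the correction's digits, after a balanced base-$k$ expansion with digits in $[-\lceil k/2\rceil,\lceil k/2\rceil]\subset A-A$, slot into otherwise-unused positions of the representation of $k^N-\gamma'\beta_q$ as an element of $G_{n'}$ embedded in $G_N$.

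The main obstacle is simultaneous compatibility for $B$ with widely varying $\beta_q$, since the per-$q$ target windows for $\gamma$ have centers $k^N/\beta_q$ that are incomparable once $\beta_{\max}/\beta_{\min}$ exceeds $k/\lfloor k/2\rfloor$. The high-order correction scheme sidesteps this by placing each new $q$'s correction in a strictly higher digit band of $\gamma$; verifying that corrections for different $\beta_q$'s do not interfere, and that the resulting digits of $k^N-\gamma\beta_q$ all lie in $A-A$, is the delicate bookkeeping step. Corollary \ref{C:evenodd} provides a useful additional tool here: since all integers of parity $k-1$ in $[-(k^N-1),k^N-1]$ belong to $G_N$, any parity obstruction in individual digits lying outside the balanced range $[-\lceil k/2\rceil,\lceil k/2\rceil]$ can be resolved by replacing a digit $d$ with $d\mp k$ and carrying into a neighboring position, at the cost of inflating $N$ by a bounded amount per inductive step. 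The final $\gamma$ is then an integer whose size is controlled, so condition (1) is satisfied for some $m$ as discussed in the first paragraph.
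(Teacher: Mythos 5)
Your proposal takes a genuinely different route from the paper, but it has a gap that I do not think the stated toolkit can close. The paper's proof is a counting argument: it sets $\lambda_n=\bigl|\left[k^n-1\right]\setminus D(n)\bigr|$ for $D(n)=\sum_{i=0}^{n-1}k^i(A-A)$, derives the recursion $\lambda_{n+1}=(2g+1)\lambda_n+g$ where $g$ is the number of gaps in $A-A$, concludes $\lambda_n/k^n\to 0$ for $k\ge 3$, and then finds $\gamma$ by pigeonhole over at least $|S|/v^2$ pairwise disjoint sets $Q_x=\{k^n-x\beta_q\}$ with $x$ ranging over $\left[\lfloor k^n/\beta'\rfloor\right]\cap(k^m-D(m))$. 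You instead try to use only the two explicit subsets of $D(n)$ supplied by Corollaries \ref{C:lowerhalf} and \ref{C:evenodd} (a central interval of radius $R_n=\lceil k/2\rceil\frac{k^n-1}{k-1}$ and one parity class) together with a digit-perturbation induction on $v$. No density information about $D(n)$ enters your argument, and that information is the heart of the matter.

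The failure point is the inductive step. Writing $k^N-\gamma\beta_q=(k^N-k^{n'})+(k^{n'}-\gamma'\beta_q)-\delta k^L\beta_q$ for an old index $q<v$, the inductive hypothesis controls only the middle term, while the first term occupies every position $n',\dots,N-1$ with the digit $k-1$. Those positions are not ``otherwise unused,'' and since $k-1$ is the extreme point of $A-A$, the balanced digits of $-\delta k^L\beta_q$ cannot simply be added there: $k-1+e$ for $e\in[-\lceil k/2\rceil,0]$ lands in $[\lfloor k/2\rfloor-1,\,k-1]$, whose elements above $\lceil k/2\rceil$ of the wrong parity need not lie in $A-A$. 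If you instead fall back on the guaranteed subsets of $D(N)$, you need either $\gamma\beta_q$ in a window of bounded multiplicative width (tending to $3$ as $k\to\infty$, e.g.\ $[0.4,1.6]\cdot k^N$ for $k=6$) or $k^N-\gamma\beta_q$ of the same parity as $k-1$; the parity escape is unavailable when $\beta_q$ has the wrong parity, and the windows for distinct such $\beta_q$ are pairwise disjoint once $\max\beta_q/\min\beta_q$ exceeds that bounded ratio. Concretely, for $k=6$, $A=\{0,2,3,5\}$ (so $4\notin A-A$) and $B=\{2,200\}$, both $k^N-2\gamma$ and $k^N-200\gamma$ are even while the parity class is odd, and the two central windows for $\gamma$ are disjoint, so your argument produces no admissible $\gamma$ even though the lemma holds there. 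A secondary issue: your claim that condition (1) is free because the annuli $[\lfloor k/2\rfloor k^{m-1},(k+\lceil k/2\rceil)k^{m-1}]$ cover all large integers fails for $k\ge 4$ (one needs $k+\lceil k/2\rceil\ge k\lfloor k/2\rfloor$); the paper handles this by drawing $\gamma$ from $\left[\lfloor k^n/\beta'\rfloor\right]\cap(k^m-D(m))$ and invoking Corollary \ref{C:evenodd} to show this is at least half the interval.
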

\begin{proof}
For brevity, set $D(n)=\sum_{i=0}^{n-1} k^i(A-A)$ (a sum set). Let $C = \left[k-1\right] \setminus (A-A)$, and let $g =|C|$. For every $n\in \N$, we will set
\[\lambda_n = \Big| \left[k^n-1\right]\setminus D(n) \Big|.\]
Clearly $D(1) = A-A$, so $r_1 = |C| = g$. For a natural number $n$, suppose that the value $\lambda_n$ is known; we wish to determine $r_{n+1}$. To do this, we will count the elements in $\left[k^{n+1}-1\right]$ which do not appear in $D(n+1)$. We can imagine the set $A-A$ as containing ``gaps", around which these elements are clustered. Corollary \ref{C:lowerhalf} implies that $\left[\left \lceil \frac{k}{2}  \right\rceil k^{n}\right]\subset D(n+1)$. Applying Corollary \ref{C:lowerhalf} to $D(n)$ also implies that the only elements in $[k^{n+1}-1]$ not appearing in $D(n+1)=D(n)+k^n(A-A)$ are more than $\left \lceil \frac{k}{2}  \right\rceil k^{n-1} \ge \frac{k^n}{2}$ away from any element of the form $xk^n$, where $x\in A-A$. Suppose that $x,y,z$ are elements of $[k-1]$ with $x<y<z$; suppose further that $y\not\in A-A$. Then by supposition (on the preconditions of Lemma \ref{L:incompletediff}), $x,z\in A-A$. Hence, by omitting $y$ from $A-A$, we exclude $2\lambda_n+1$ elements from $D(n+1)$: the first $2\lambda_n$ elements are precisely of the form $p+xk^n$ and $zk^n-p$ for some $p\in \left[k^n - 1\right] \setminus D(n)$, and the last element is $yk^n$ itself. So every gap in $A-A$ excludes $2\lambda_n+1$ elements from $D(n+1)$ independently. By supposition we also have $(k-1)k^n \in D(n+1)$, but we exclude all of the $\lambda_n$ elements of the form $(k-1)k^n + p$ for $p\in \left[k^n - 1\right] \setminus D(n)$. Hence, 
\[r_{n+1} =g(2\lambda_n+1)+\lambda_n = (2g+1)\lambda_n+g.\]
From Lemma \ref{L:incompletediff} we have $2g+1 \le \frac{k}{2}$ and we know that $g\le \lambda_n$, yielding: 
\begin{align*}
\frac{r_{n+1}}{\big|[k^{n+1}-1]\big|} = \frac{(2g+1)\lambda_n+g}{k\big|[k^{n}-1]\big|} \le \frac{\frac{k}{2}+1}{k} \frac{\lambda_n}{\big|[k^{n}-1]\big|}.
\end{align*}
So 
\[\frac{r_{n}}{\big|[k^n-1]\big|} \le \left(\frac{\frac{k}{2}+1}{k}\right)^{n-1} \frac{g}{k}.\]
When $k\ge 3$, this implies that $\lim_{n\rightarrow \infty} \frac{r_{n}}{\left|[k^n-1\right|} = 0$.

For the final part of the proof, let $\beta' = \max B$. Because $\beta'$ and $v$ are fixed, we have 
\[\lim_{n\rightarrow \infty} \frac{\frac{1}{2v^2}\flr{\frac{k^n}{\beta'}}}{k^n}= \frac{1}{2v^2\beta'}>0.\]
Thus, we can choose an $n\in \N$ such that $\frac{\lambda_n}{k^n}=\frac{\lambda_n}{\left|[k^n-1]\right|} <\frac{\frac{1}{2v^2}\flr{\frac{k^n}{\beta'}}}{k^n}$, which implies $\lambda_n < \frac{1}{2v^2}\flr{\frac{k^n}{\beta'}}$. Set $m\in \N$ such that $k^{m-1}\le \flr{\frac{k^n}{\beta'}}< k^m$, and let $S = \left[\flr{\frac{k^n}{\beta'}}\right] \cap (k^m-D(m))$ (where $k^m - D(m) = \{k^m-x:\, x\in D(m)\}$). By Corollaries \ref{C:lowerhalf} and \ref{C:evenodd}, deduce that 
\begin{equation}
|S|=\left| \left[\flr{\frac{k^n}{\beta'}}\right]\cap (k^m-D(m))\right| \ge \frac{1}{2}\left|\left[\flr{\frac{k^n}{\beta'}}\right]\right| - 1 =\frac{1}{2}\flr{\frac{k^n}{\beta'}} .\label{E:setinequality}
\end{equation}
For each element $x\in S$, let $Q_x = \{k^n-x\beta_q:\, 1\le q \le v\}$. Note that $Q_x \subset D(n)$ for every setting of $x$. Also, each set $Q_x$ intersects with at most $v(v-1)<v^2$ other sets $Q_{x'}$: those that set their smallest element equal to the second smallest of $Q_x$, their smallest equal to the third smallest of $Q_x$ (etc...), those that set their second smallest element equal to the smallest of $Q_x$, those that set their second smallest element equal to the third smallest of $Q_x$, etc... Hence, there are at least $\frac{|S|}{v^2}$ pairwise disjoint sets of the form $Q_x,\, x \in S$. But by (\ref{E:setinequality}),
\[\frac{|S|}{v^2}\ge \frac{1}{2v^2} \flr{\frac{k^n}{\beta'}}>\lambda_n.\]
Now $\lambda_n$ measures the size of $\left[ k^n -1\right] \setminus D(n)$. It follows that we must have some $x\in S$ such that $Q_x \subset D(n)$. By the definition of $S$, the statement of the lemma holds with $\gamma = x$.
\end{proof}

\begin{proposition} \label{P:tqtwo} Let $T$ be a type $(t,q)$-Chac\'{o}n Transformation with $k \ge 3$, for which there exist integers $f,g,\, 0 \le f , g < t$ with $\phi(f)-\phi(g) = 1$ (i.e.\ we refrain from adding a spacer tower on top of one of the subcolumns at each stage). Fix any $i\in \N$, and let $I$ denote the base of column $C_i$. Let $\alpha = (\alpha_1,...,\alpha_{v-1})$ be any tuple of nonzero integers, and let $b_0,...,b_{v-1}$ be any $v$-tuple of integers in $\{0,...,h_n\}^v$. Then there exists an integer $z > 0$ such that, for every $n \ge i$ and $q \in \{0,...,v-1\}$, there is a sequence of height set elements $x_{q,0} \in H_{n}$, $x_{q,1} \in H_{n+1},...,\, x_{q,z-1} \in H_{n + z -1}$ such that for any descendant tuple $(a_0,...,a_{v-1}) \in D(I,N)^v$ with summand components $a_{q,\ell + n} = x_{q,\ell}$ (assume that $N \ge n + z$) for all $q$ and $\ell \in \{0,...,z-1\}$, we can associate a unique complementary tuple $(d_0,...,d_{v-1}) \in D(I,N)^v$ satisfying 
\begin{equation} \label{E:pwmsimple}
a_0 - d_0 - b_0 = \frac{a_q - d_q - b_q}{\alpha_q} 
\end{equation}
for $ q \in \{1,...,v-1\}$. 
\end{proposition}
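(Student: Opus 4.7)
The plan is to extract the statement from the number-theoretic engine of Lemma \ref{L:incompletediff2}, applied to the set $A=\{\phi(0),\ldots,\phi(t-1)\}$ underlying the height sets $H_j=h_j\cdot A$. The hypothesis $\phi(f)-\phi(g)=1$ supplies the ``$\phi(i)-\phi(j)=1$'' assumption required by Lemma \ref{L:incompletediff}, and $k\ge 3$ matches the hypothesis of Lemma \ref{L:incompletediff2}. Setting $N=a_0-d_0-b_0$, the target equation \eqref{E:pwmsimple} amounts to $a_q-d_q=\alpha_q N+b_q$ for every $q$. I will take $d_q$ to agree with $a_q$ outside the prescribed block $\{n,\ldots,n+z-1\}$, which reduces the task to choosing $x_{q,\ell}$ and $d_{q,n+\ell}$ in $H_{n+\ell}$ so that the identity $\sum_{\ell=0}^{z-1}(x_{q,\ell}-d_{q,n+\ell})=\alpha_q N+b_q$ holds simultaneously for every $q$ and a common integer $N$.

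The construction proceeds by invoking Lemma \ref{L:incompletediff2} on $B=\{|\alpha_1|,\ldots,|\alpha_{v-1}|\}$, augmented by auxiliary entries designed to absorb the bounded offsets $b_q$, producing a common $\gamma$ and indices $n_0,m_0$. The expansions of $k^{n_0}-\gamma|\alpha_q|$ and $k^{m_0}-\gamma$ as sums $\sum_i c_i k^i$ with $c_i\in A-A$ supply prescribed coefficients $c_{q,\ell}\in A-A$ so that $\sum_\ell c_{q,\ell}h_{n+\ell}=\alpha_q N+b_q$ for a common $N$ proportional to $\gamma$. Carry discrepancies from $h_{j+1}=k h_j+1\neq k\cdot h_j$, together with the offsets $b_q$, are absorbed in a slack block of levels whose existence is guaranteed by Corollaries \ref{C:lowerhalf} and \ref{C:evenodd}. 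Sign changes $\alpha_q<0$ are handled by swapping the roles of $x_{q,\ell}$ and $d_{q,n+\ell}$ on the relevant levels, using symmetry of $A-A$ about zero. The resulting block length $z$ depends only on $\alpha$ and $b$, not on $n$, as required.

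The main obstacle is enforcing \emph{uniqueness} of the complementary tuple $(d_0,\ldots,d_{v-1})$. I would arrange this level by level: since each height in $H_{n+\ell}$ has the form $\phi(i)h_{n+\ell}$ with the $\phi(i)$ distinct, a prescribed $x_{q,\ell}=\phi(i_{q,\ell})h_{n+\ell}$ together with the required coefficient $c_{q,\ell}\in A-A$ usually determines $d_{q,n+\ell}$ uniquely as the unique $\phi(j)h_{n+\ell}\in H_{n+\ell}$ with $\phi(i_{q,\ell})-\phi(j)=c_{q,\ell}$. Any residual ambiguity on the finitely many slack levels can be eliminated by choosing $x_{q,\ell}$ to be extreme ($0$ or $\phi(t-1)h_{n+\ell}$), which admits a single complementary choice; and alternative complementary tuples with $d_{q,\ell}\neq a_{q,\ell}$ on some level outside the prescribed block are precluded by a growth comparison showing that the prescribed block's contribution is of order $h_{n+z-1}$ and dominates every other admissible contribution. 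With this local-to-global uniqueness in hand, the proposition follows.
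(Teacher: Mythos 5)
Your outline reproduces the paper's own argument: write $H_{n+\ell}=h_{n+\ell}\cdot A$ so that on-block differences become $\sum_\ell c_{q,\ell}h_{n+\ell}$ with $c_{q,\ell}\in A-A$, invoke Lemma \ref{L:incompletediff2} with $B=\{|\alpha_1|,\ldots,|\alpha_{v-1}|\}$ to obtain a common $\gamma$, handle $\alpha_q<0$ by swapping the roles of $a$ and $d$, and append a slack block (zeros matched against maximal elements, capped by one level whose difference is $h_{n+j+r_q}$, available precisely because $1\in A-A$) whose length $r_q$ shifts $a_q-d_q$ by exactly one unit per level via $h_{j+1}=kh_j+1$ --- this is what simultaneously cancels the $K_\ell$ carries and hits the arbitrary offsets $b_q$, and it yields a $z$ depending only on $\alpha$ and $b$. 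The only inaccuracies are cosmetic: the slack-block mechanism rests on this telescoping identity rather than on Corollaries \ref{C:lowerhalf} and \ref{C:evenodd} (which live inside the proof of Lemma \ref{L:incompletediff2}), no augmentation of $B$ is needed since $b_q$ enters only through the choice of $r_q$, and uniqueness needs no growth comparison because $d_{q,\ell}$ is a fixed function of the prescribed $x_{q,\ell}$ on the block and equals $a_{q,\ell}$ off it.
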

\begin{proof}
We will argue by Proposition \ref{P:arbproduct} that $T\times T^{\alpha} = T\times T^{\alpha_1} \times ... \times T^{\alpha_{v-1}}$ is ergodic for any tuple of nonzero integers $\alpha = (\alpha_1,...,\alpha_{v-1})$. Fix such an $\alpha$, and suppose that $T$ is as specified. Then, let $n$ be any integer that is at least $i$. For brevity, let $D(I,j)$ denote the descendants of $I$ in the $(n+j)^\text{th}$ column. Also, let $K_j = \sum_{\ell=0}^{j-1} k^\ell$ for any natural number $j$, and $K_0 = 0$. Recall that we have $H_n = \big\{0,\, \phi(1) h_n , \, \phi(2) h_n , ..., \, \phi(t-1) h_n\big\}$ and for any $j\in \N$, 
\begin{equation}
H_{n+j} = \Big\{0,\, k^j \phi(1) h_n + \phi(1) K_j,\, k^j \phi(2) h_n + \phi(2) K_j,...,\, k^j(k-1)h_n + (k-1) K_j\Big\}, \label{E:htsetstructure}
\end{equation}
where $h_n$ is the height of column $C_n$. For now, let $H_{n+j}'$ denote the set of coefficients $0,\phi(1),...,\phi(k-1)$ on $h_n$ in $H_{n+j}$, and let $D(I,j)' = \sum_{\ell=0}^{j-1} H_{n + \ell}'$. Then for any $j\in \N$, we can write 
\begin{align*}
H_{n+j}'-H_{n+j}' &= k^j \left(\Big\{0,\phi(1),\phi(2),...,k-1\Big\} - \Big\{0,\phi(1),\phi(2),...,k-1\Big\} \right)\\
& = k^j (H_n'-H_n').
\end{align*}
Also, observe that
\begin{align}
(H_n'-H_n')+k(H_n'-H_n')+...+k^{j-1}(H_n'-H_n')& = \sum_{\ell=0}^{j-1} k^\ell H_{n}'-\sum_{\ell=0}^{j-1} k^\ell H_{n}' \nonumber \\
& = \sum_{\ell=0}^{j-1} H_{n + \ell}'-\sum_{\ell=0}^{j-1} H_{n + \ell}' \nonumber \\
& = D(I,j)'-D(I,j)'. \label{E:tqdescendants}
\end{align}

By assumption and the definition of $(t,q)$-Chac\'{o}n type maps, the set $\left\{0,\phi(1),...,k-1\right\}$ satisfies the conditions of $A$ in Lemma \ref{L:incompletediff} and hence we can apply Lemma \ref{L:incompletediff2} with $B=\{|\alpha_1|,|\alpha_3|,...,|\alpha_{v-1}|\}$. Therefore, for some $m,j\in \N$, we can identify an element $\gamma \in k^m -D(I,m)'$ with $k^j-\gamma |\alpha_q| \in D(I,j)'$ for every $q\in \{1,...,v-1\}$. Specifically, for every such $q$, we have elements $x_{q,\ell}$ and $y_{q,\ell}$ in $\{0,...,t-1\}$ which satisfy:
\begin{align}
-(k^j-\gamma |\alpha_q|)=\sum_{\ell = 0}^{j-1} k^\ell\left(\phi\left(x_{q,\ell}\right) - \phi\left(y_{q,\ell}\right)\right), \label{E:tqcoeffs1}
\end{align}
where have used the fact that the difference set is symmetric about zero. Furthermore, Lemma \ref{L:incompletediff2} gives elements $x_{0,\ell}$ and $y_{0,\ell}$ in $\{0,...,t-1\}$ with 
\begin{align}
-(k^m - \gamma) = \sum_{\ell = 0}^{m-1} k^\ell\left(\phi\left(x_{1,\ell}\right) -\phi\left(y_{1,\ell}\right)\right). \label{E:tqcoeffs2}
\end{align}

For brevity, we let $a_{q,\ell}\in H_{n+\ell}$ denote the $H_{n+\ell}$-summand component of a descendant $a_q\in D(I,N)$ for some sufficiently large $N$ (i.e. $N > n + \ell$). We will do the same for elements of the form $d_{q,\ell}$.

Consider first the case when $\alpha_q$ is positive for $q\ne 0$. We appeal to equation (\ref{E:htsetstructure}) to deduce that we can set $a_{q,\ell} = k^\ell\phi(x_{q,\ell})h_n+\phi(x_{q,\ell})K_\ell$ for elements $x_{q,\ell},\, y_{q,\ell} \in \{0,...,t-1\}$. Similarly, set $d_{q,\ell} = k^\ell\phi(y_{q,\ell})h_n+\phi(y_{q,\ell})K_\ell$. Hence, from $\ell = 0 $ to $\ell = j-1$, equation (\ref{E:tqcoeffs1}) implies that
\begin{align}
\sum_{\ell=0}^{j-1} a_{q,\ell}-d_{q,\ell} & = \sum_{\ell=0}^{j-1}\left(k^\ell\phi(x_{q,\ell})h_n+\phi(x_{q,\ell})K_\ell-k^\ell\phi(y_{q,\ell})h_n-\phi(y_{q,\ell})K_\ell\right) \nonumber \\
& = -\left(k^j-\gamma \alpha_i\right)h_n + \sum_{\ell=0}^{j-1}K_\ell\left(\phi(x_{q,\ell})-\phi(y_{q,\ell})\right). \label{E:posalpha}
\end{align}
Similarly, 
\begin{align}
\sum_{\ell=0}^{m-1} a_{1,\ell}-d_{1,\ell} & = -\left(k^m-\gamma\right)h_n + \sum_{\ell=0}^{m-1}K_\ell\left(\phi(x_{1,\ell})-\phi(y_{1,\ell})\right). \label{E:alphaone}
\end{align}
For values of $q$ with $\alpha_q<0$, we invert the output of (\ref{E:posalpha}) by reversing the assignments of $a_{q,\ell}$ and $d_{q,\ell}$ to obtain
\begin{align}
\sum_{\ell=0}^{j-1} a_{q,\ell}-d_{q,\ell} & = \left(k^j-\gamma |\alpha_q|\right)h_n - \sum_{\ell=0}^{j-1}K_\ell\left(\phi(x_{q,\ell})-\phi(y_{q,\ell})\right). \label{E:negalpha}
\end{align}
For a natural number $r_q$, consider what happens in equation (\ref{E:posalpha}) when we also set $a_{q,\ell}=0$ and $d_{q,\ell}=k^\ell(k-1)h_n + (k-1)K_\ell$ (the maximal element of $H_{n + \ell}$) for $j \le \ell < j+r_q$, and $a_{q,j+r_q}-d_{q,j+r_q}=k^{j+r_q}h_n+K_{j+r_q}$, which we can do by supposition on $T$. Then we should have 
\begin{align}
\sum_{\ell=0}^{j+r_q} a_{q,\ell}-d_{q,\ell} =& -\left(k^j-\gamma \alpha_q\right)h_n + \sum_{\ell=0}^{j-1}K_\ell\left(\phi(x_{q,\ell})-\phi(y_{q,\ell})\right)+k^{j+r_q}h_n+K_{j+r_q}\nonumber\\
&-\sum_{\ell=j}^{j+r_q-1} \left(k^\ell(k-1)h_n + (k-1)K_\ell\right)\nonumber\\
= & \gamma \alpha_q h_n + \sum_{\ell= 0 }^{j-1} K_\ell \left( k-1+\phi(x_{q,\ell})-\phi(y_{q,\ell}) \right) - \sum_{\ell = 0}^{j+r_q-1}(k-1)K_\ell + K_{j+r_q} \nonumber \\
= & \gamma \alpha_q h_n + \sum_{\ell= 0 }^{j-1} K_\ell \left( k-1+\phi(x_{q,\ell})-\phi(y_{q,\ell}) \right) + j+r_q, \label{E:posalpha2}
\end{align}
where simplification has resulted from the definition of $K_\ell$ and the fact that $\sum_{\ell = j}^{j + z - 1} k^\ell (k-1) = k^{j + z } - k^j$ for any $ z > 0$. In a similar fashion, but by reversing the assignments of $a_{q,\ell}$ and $d_{q,\ell}$, for values of $q$ with $\alpha_q<0$ we could extend (\ref{E:negalpha}) to the following: 
\begin{align}
\sum_{\ell=0}^{j+r_q} a_{q,\ell}-d_{q,\ell} =& \left(k^j+\gamma \alpha_q\right)h_n - \sum_{\ell=0}^{j-1}K_\ell\left(\phi(x_{q,\ell})-\phi(y_{q,\ell})\right)-k^{j+r_q}h_n-K_{j+r_q}\nonumber\\
&+\sum_{\ell=j}^{j+r_q-1} \left(k^\ell(k-1)h_n + (k-1)K_\ell\right)\nonumber\\
=& \gamma \alpha_q h_n -\sum_{\ell= 0 }^{j-1} K_\ell \left( k-1+\phi(x_{q,\ell})-\phi(y_{q,\ell}) \right) - j-r_q. \label{E:negalpha2}
\end{align}
And similarly, we can specify $a_{0,\ell}$ and $d_{0,\ell}$ such that 
\begin{align}
\sum_{\ell=0}^{m+r_1} a_{1,\ell}-d_{1,\ell} = & \gamma h_n + \sum_{\ell= 0 }^{m-1} K_\ell \left( k-1+\phi(x_{1,\ell})-\phi(y_{1,\ell}) \right) + m+r_1. \label{E:alphaone2}
\end{align}
This brings us to the final stage. In equations (\ref{E:posalpha2}) and (\ref{E:negalpha2}), let 
\[L_q = \sum_{\ell= 0 }^{j-1} K_\ell \left( k-1+\phi(x_{q,\ell})-\phi(y_{q,\ell}) \right) + j,\]
and in equation (\ref{E:alphaone2}), let 
\[L_0 = \sum_{\ell= 0 }^{m-1} K_\ell \left( k-1+\phi(x_{1,\ell})-\phi(y_{1,\ell}) \right) + m.\]
Let $\{b_0,...,b_{v-1}\}\subset \N\cup \{0\}$. Pick $r_0$ high enough such that for all nonzero $q$ with $\alpha_q > 0$:
\[\alpha_q(L_0+r_0-b_0)+b_q>L_q,\]
and for all $q$ with $\alpha_q<0$:
\[-\alpha_q(L_0+r_0-b_0)-b_q>L_q.\]
In both cases, for $q \ne 0$ we will set $r_q = |\alpha_q|(L_0+r_0-b_0)+\text{sgn}(\alpha_q)b_q-L_q$. Then by letting $a_{q,\ell}$ and $d_{q,\ell}$ be as specified for $\ell = 0 $ to $\ell = j+r_q$ (note that $d_{q,\ell}$ is unique to $a_{q,\ell}$ in this selection), and setting $d_{q,\ell}=a_{q,\ell}$ elsewhere, we obtain descendants $a_0,...,a_{v-1},d_0,...,d_{v-1}$ such that for nonzero $q$ with $\alpha_q>0$,
\begin{align*}
a_q-d_q &= \gamma \alpha_q h_n+ L_q+r_q \\
&= \gamma \alpha_q h_n + L_q + \alpha_q (L_0+r_0-b_0)+b_q-L_q\\
&= \alpha_q(\gamma h_n + L_0+r_0-b_0)+b_q\\
& = \alpha_q (a_0-d_0-b_0)+b_q,
\end{align*}
by (\ref{E:posalpha2}) and similarly, for $q$ with $\alpha_q<0$,
\begin{align*}
a_q-d_q &= \gamma \alpha_q h_n- L_q-r_q \\
&= \gamma \alpha_q h_n - L_q + \alpha_q (L_0+r_0-b_0)+b_q+L_q\\
&= \alpha_q(\gamma h_n + L_0+r_0-b_0)+b_q\\
& = \alpha_q (a_0-d_0-b_0)+b_q.
\end{align*}
by (\ref{E:negalpha2}). Hence, by fixing at most $j+\max\{r_q\}$ summand components of $a_0,...,a_{v-1}$, we are able to identify a unique set of descendants $d_0,...,d_{v-1}$  such that for every $q\in \{1,...,v-1\}$, the power weakly mixing condition described in (\ref{E:pwmsimple}) is fulfilled. So the proof works with $z = j+\max\{r_q\}$. 
\end{proof}

\begin{theorem} \label{T:tqfinal}
A $(t,q)$-type Chac\'{o}n map is power weakly mixing if and only if we omit a spacer stack from at least one subcolumn that is not the rightmost subcolumn. 
\end{theorem}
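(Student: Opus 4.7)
I would treat the two directions separately, using Proposition \ref{P:tqtwo} for sufficiency and a direct parity obstruction for necessity. The main subtlety I anticipate is extending Proposition \ref{P:tqtwo}, which only supplies ergodicity of products whose leading factor is $T$, to arbitrary products of nonzero powers of $T$.

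For the sufficient direction I would assume some pair with $\phi(f) - \phi(g) = 1$ exists, and fix $v \ge 1$, nonzero integers $\alpha_1, \ldots, \alpha_{v-1}$, the base level $I$ of some column $C_i$, a tuple $(b_0, \ldots, b_{v-1}) \in \{0, \ldots, h_i - 1\}^v$, and any $n \ge i$. Proposition \ref{P:tqtwo} then supplies $z > 0$ and height-set elements $x_{q, \ell}$ (with $0 \le \ell < z$, $0 \le q \le v-1$) so that any $v$-tuple $(a_0, \ldots, a_{v-1}) \in D(I, N)^v$ with $a_{q, n+\ell} = x_{q, \ell}$ admits a unique complementary tuple satisfying \eqref{E:pwmsimple}. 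The fraction of such tuples in $D(I, N)^v$ equals $\beta := \prod_{\ell=0}^{z-1} |H_{n+\ell}|^{-v}$, positive and independent of $N$, so the sufficiency in Proposition \ref{P:arbproduct} gives ergodicity of $T \times T^{\alpha_1} \times \cdots \times T^{\alpha_{v-1}}$. To lift this to a product $T^{\alpha_0} \times T^{\alpha_1} \times \cdots \times T^{\alpha_{v-1}}$ with arbitrary nonzero $\alpha_0$, I would apply the same argument to the enlarged product $T \times T^{\alpha_0} \times T^{\alpha_1} \times \cdots \times T^{\alpha_{v-1}}$; an invariant set $A$ of positive, positive-complement measure in the coordinate-factor subsystem would lift to the invariant set $X \times A$ of the enlarged product with both it and its complement of infinite measure, contradicting the ergodicity just obtained. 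This yields power weak mixing.

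For the necessary direction I argue the contrapositive. If no $f, g$ give $\phi(f) - \phi(g) = 1$, the standing constraint $\phi(i+1) - \phi(i) \in \{1, 2\}$ forces $\phi(i+1) - \phi(i) = 2$ for all $i$, so $\phi(i) = 2i$ and $H_n = 2 h_n \cdot \{0, 1, \ldots, t-1\}$ consists entirely of even integers; consequently $D(I, j) - D(I, j) \subset 2\mathbb{Z}$ for every level $I$ in every column $C_i$. I would then test the ergodicity criterion of Proposition \ref{P:arbproduct} against $T \times T^2$: taking $I$ the base of any $C_i$ with $h_i \ge 2$ and $(b_0, b_1) = (0, 1)$, the required identity $a_0 - d_0 = (a_1 - d_1 - 1)/2 \in \mathbb{Z} \setminus \{0\}$ forces $a_1 - d_1$ to be odd, contradicting the parity observation. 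Since no descendant pair meets the condition, the necessity half of Proposition \ref{P:arbproduct} shows $T \times T^2$ is not conservative ergodic; ergodicity implies conservativity in our setting, so $T \times T^2$ is not ergodic and $T$ is not power weakly mixing. The hardest step will be the sigma-finite factor argument in the sufficient direction; the parity obstruction for necessity is transparent once $H_n$ is written out, and the summand-fraction calculation giving $\beta$ is a straightforward combinatorial count.
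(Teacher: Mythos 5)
Your proof is correct, and it diverges from the paper's in one half while refining the other. For sufficiency you follow the paper's route exactly — invoke Proposition \ref{P:tqtwo} to get a prescribed block of $z$ summand components per coordinate, count the fraction $\beta = t^{-vz}$ of descendant tuples carrying that block, and feed this into Proposition \ref{P:arbproduct} — but you add a step the paper silently skips: Proposition \ref{P:tqtwo} only treats products whose leading factor is $T$ itself, and your reduction (prepend a $T$ factor and observe that an invariant set $A$ for the sub-product lifts to the invariant set $X\times A$ for the enlarged product) is exactly the right way to reach arbitrary $T^{\alpha_0}\times\cdots\times T^{\alpha_{v-1}}$; the paper's proof jumps straight from ``$(\alpha_1,\ldots,\alpha_{v-1})$ was arbitrary'' to power weak mixing. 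For necessity your argument is genuinely different and more elementary: the paper proves inductively that $h_n \ge \max D(I,n)+2$ and then shows the topmost index at which descendant differences disagree contributes a discrepancy of size at least $4$, killing ergodicity of $T\times T$ with $(b_0,b_1)=(0,1)$; you instead observe that when every non-rightmost subcolumn gets a spacer stack, $H_n = \{2jh_n : 0\le j\le t-1\}\subset 2\Z$, so all descendant differences are even, and the ergodicity identity for $T\times T^2$ with $(b_0,b_1)=(0,1)$ would force $a_1-d_1$ to be odd. The parity obstruction is shorter, avoids the induction on heights, and in fact applies equally well to $T\times T$ with the same $b$-tuple; the paper's magnitude argument is more robust in that it would survive perturbations of the spacer counts that break the parity structure, but for this theorem both are sound. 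One trivial point you leave implicit: Proposition \ref{P:tqtwo} requires $k\ge 3$, which holds automatically here since $t\ge 3$ and $q\ge 1$ give $k=q+t\ge 4$.
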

\begin{proof}
First we show sufficiency. If $k < 3$, then clearly $k =2$, which is impossible if $T$ is an infinite measure preserving transformation. Thus, if we omit a spacer stack from at least one subcolumn that is not the rightmost one, Proposition \ref{P:tqtwo} applies. Let $i$ be any positive integer, $I$ the base of $C_i$, $\alpha$ any $(v-1)$-tuple of nonzero integers, and $b_0,...,b_{v-1}$ be any tuple in $\big\{0,...,h_i - 1\big\}^v$. By Proposition \ref{P:tqtwo}, there exists an integer $z\equiv z(b_0,\ldots , b_{v-1})$ such that, for $N \ge i + z$, any descendant tuple $(a_0,...,a_{v-1}) \in D(I,N)$ with elements having a certain summand sequence $a_{q,i},...,a_{q,i+z -1}$ for all $q \in \{0,...,v-1\}$ can be associated to a unique descendant tuple $(d_0,...,d_{v-1})$ satisfying the power weakly mixing condition in (\ref{E:pwmsimple}). Hence, Proposition \ref{P:arbproduct} implies that $T \times T^{\alpha_1} \times \cdots \times T^{\alpha_{v-1}}$ is ergodic with $\beta(b_0,\ldots , b_{v-1}) = \frac{1}{t^{vz}}$. Because $(\alpha_1,...,\alpha_{v-1})$ was arbitrary, $T$ is power weakly mixing. 

To show necessity, consider a $(t,q)$-type map $T$ with spacer stacks added to every subcolumn except for the last one. Then for any $n\in \N$, we have a height set $H_n$ of the following form:
\[H_n = \big\{0,2h_n,4h_n,...,(k-1)h_n\big\}.\]
Let $I$ denote the base of the first column $C_0$. Inductively, we can show that $h_n\ge \max D(I,n)+2$ for every $n\in \N$. In the base case with $n=1$, we have $h_1 = kh_0+1 = k+1$, and $\max D(I,1) = (k-1)h_0 = k-1$. Suppose that for some $n\in \N$, we have $h_n \ge \max D(I,n)+2$. Then 
\begin{align*}
h_{n+1} =& kh_n +1 \\
=& (k-1)h_n + h_n + 1 \\
\ge & (k-1) h_n + \max D(I,n)+2\\
= & \max D(I,n+1) + 2. 
\end{align*}
Suppose that $T\times T$ was ergodic; then for some $N\in \N$, we should be able to find elements $a_1,a_2,d_1,d_2\in D(I,N)$ such that 
\begin{align}\label{E:ergnecess}
 \sum_{i=0}^{N-1}\left( a_{1,i}-a_{2,i} \right) = a_1-a_2 = d_1-d_2-1=\sum_{i=0}^{N-1}\left( d_{1,i}-d_{2,i} \right) - 1 . 
 \end{align}
Let $m\in \{0,...,N-1\}$ be the highest natural number such that $a_{1,m}-a_{2,m} \ne d_{1,m}-d_{2,m}$ (such an $m$ clearly must exist). Then 
\[\left|a_{1,m}-a_{2,m} -( d_{1,m}-d_{2,m})\right| \ge 2h_m \ge 2(\max(D(I,m)+2),\]
implying the following inequality: 
\begin{align*}
\Big|a_1-a_2-(d_1-d_2)\Big| & = \left| a_{1,m}-a_{2,m}-d_{1,m}+d_{2,m}+\sum_{i=0}^{m-1}\left( a_{1,i}-a_{2,i}-d_{1,i}+d_{2,i}\right)\right|\\
& \ge 4,
\end{align*}
which contradicts \eqref{E:ergnecess}.
\end{proof}


\bibliographystyle{plain}
\bibliography{ergodicOct21}

\begin{thebibliography}{10}

\bibitem{AdFrSi97}
Terrence Adams, Nathaniel Friedman, and Cesar~E. Silva.
\newblock Rank-one weak mixing for nonsingular transformations.
\newblock {\em Israel J. Math.}, 102:269--281, 1997.

\bibitem{AdFrSi01}
Terrence Adams, Nathaniel Friedman, and Cesar~E. Silva.
\newblock Rank-one power weakly mixing non-singular transformations.
\newblock {\em Ergodic Theory Dynam. Systems}, 21(5):1321--1332, 2001.

\bibitem{AdSi16}
Terrence~M. Adams and Cesar~E. Silva.
\newblock Weak rational ergodicity does not imply rational ergodicity.
\newblock {\em Israel J. Math.}, 214(1):491--506, 2016.

\bibitem{CFKLPS}
Julien Clancy, Rina Friedberg, Indraneel Kasmalkar, Isaac Loh, Tudor
  P{\u{a}}durariu, Cesar~E. Silva, and Sahana Vasudevan.
\newblock Ergodicity and conservativity of products of infinite transformations
  and their inverses.
\newblock {\em Colloq. Math.}, 143(2):271--291, 2016.

\bibitem{Da16b}
Alexandre~I. Danilenko.
\newblock Infinite measure~preserving~transformations with strong {R}adon
  {MSJ}.
\newblock {\em https://arxiv.org/abs/1610.07285}.

\bibitem{Da04}
Alexandre~I. Danilenko.
\newblock Infinite rank one actions and nonsingular {C}hacon transformations.
\newblock {\em Illinois J. Math.}, 48(3):769--786, 2004.

\bibitem{Da16}
Alexandre~I. Danilenko.
\newblock Finite ergodic index and asymmetry for infinite measure preserving
  actions.
\newblock {\em Proc. Amer. Math. Soc.}, 144(6):2521--2532, 2016.

\bibitem{DGMS99}
Sarah~L. Day, Brian~R. Grivna, Earle~P. McCartney, and Cesar~E. Silva.
\newblock Power weakly mixing infinite transformations.
\newblock {\em New York J. Math.}, 5:17--24 (electronic), 1999.

\bibitem{GrHiWa03}
K.~Gruher, F.~Hines, D.~Patel, C.~E. Silva, and R.~Waelder.
\newblock Power weak mixing does not imply multiple recurrence in infinite
  measure and other counterexamples.
\newblock {\em New York J. Math.}, 9:1--22 (electronic), 2003.

\bibitem{JRR}
{\'E}.~Janvresse, Emmanuel Roy, and T.~de~la Rue.
\newblock Invariant measures for cartesian powers of chacon infinite
  transformation, https://arxiv.org/abs/1505.08033.

\bibitem{KaPa63}
S.~Kakutani and W.~Parry.
\newblock Infinite measure preserving transformations with ``mixing''.
\newblock {\em Bull. Amer. Math. Soc.}, 69:752--756, 1963.

\bibitem{LoSi17}
Isaac Loh and Cesar~E. Silva.
\newblock Strict doubly ergodic infinite transformations.
\newblock {\em Dynamical Systems}, 0(ja):1--24, 0.

\bibitem{AM}
Abhaya Menon.
\newblock Power weak mixing and recurrence in infinite measure transformations.
\newblock {\em Thesis, Williams College}, 2001.

\bibitem{Si08}
C.~E. Silva.
\newblock {\em Invitation to ergodic theory}, volume~42 of {\em Student
  Mathematical Library}.
\newblock American Mathematical Society, Providence, RI, 2008.

\end{thebibliography}

\end{document}